\DeclareMathAlphabet{\mathcal}{OMS}{cmsy}{m}{n} 
\newcommand{\R}{{\mathbb{R}}}
\newcommand{\N}{{\mathbb{N}}}
\newcommand{\Ss}{{\mathcal{S}}} 
\newcommand{\NN}{{\mathcal{N}}}
\newcommand{\SLP}{{\mathcal{SP}}}
\newcommand{\F}{{\mathcal{F}}}
\newcommand{\br}{{\bar{r}}}
\newcommand{\ldef}{\mathrel{\mathop:}=}
\newcommand{\rdef}{=\mathrel{\mathop:}}
\newcommand{\lone}{\ell_1}
\newcommand{\ltwo}{\ell_2}
\newcommand{\lp}{\ell_p}
\newcommand{\subonetoN}{\subset\{1,\ldots,N\}}
\newcommand{\onetoN}{1,\ldots,N}
\newcommand{\vectornorm}[1]{\left\|#1\right\|}
\newcommand{\kappap}{\mathcal{\kappa}_p}
\newcommand{\irwl}{{IRW$\lone$ }}
\DeclareMathOperator*{\argmin}{arg\,min}
\DeclareMathOperator*{\supp}{supp}
\DeclareMathOperator*{\sign}{sign}
\newtheorem{definition}{Definition}
\newtheorem{remark}{Remark} 
\newtheorem{lemma}{Lemma} 
\newtheorem{theorem}{Theorem}
\begin{document}

\title{Damping Noise-Folding and Enhanced Support Recovery in Compressed Sensing}
\subtitle{Extended Technical Report}

\author{Marco Artina\thanks{Marco.Artina@ma.tum.de}}
\author{Massimo Fornasier\thanks{Massimo.Fornasier@ma.tum.de}}
\author{Steffen Peter\thanks{Steffen.Peter@ma.tum.de}}
\affil{\footnotesize{Technische Universit\"at M\"unchen, Fakult\"at Mathematik, Boltzmannstrasse 3 D-85748, Garching bei M\"unchen, Germany}}

\maketitle

\begin{abstract}
{The practice of compressed sensing suffers importantly in terms of the efficiency/accuracy trade-off when acquiring noisy signals prior to measurement. It is rather common to find results treating the noise affecting the measurements, avoiding in this way to face the so-called
\textit{noise-folding} phenomenon, related to the noise in the signal, eventually amplified by the measurement procedure. In this paper, we present two new decoding procedures,  combining $\ell_1$-minimization followed by either a regularized selective least $p$-powers or an iterative hard thresholding, which not only are able to reduce this component of the original noise, but also have enhanced properties in terms of support identification with respect to the sole $\ell_1$-minimization or iteratively re-weighted $\lone$-minimization. We prove such features, providing relatively simple and precise theoretical guarantees. We additionally confirm and support the theoretical results by extensive numerical simulations, which give a statistics of the robustness of the new decoding procedures with respect to more classical $\ell_1$-minimization and iteratively re-weighted $\lone$-minimization.}
{Noise folding in compressed sensing, support identification, $\ell_1$-minimization, selective least $p$-powers, iterative hard thresholding, phase transitions.}
\\
\\
2000 Math Subject Classification: 94A12, 94A20, 65F22, 90C05, 90C30
\end{abstract}

\section{Introduction}
Compressive sensing focuses on the robust recovery of nearly sparse vectors from the minimal amount of measurements obtained by a randomized linear process. So far, a vast literature appeared considering problems where deterministic or random noise is added after the measurement process, while it is not strictly related to the signal. One typically considers model problems of the type
\begin{equation}
y = Ax + w
\label{eq:enc}
\end{equation}
where $x\in\R^N$ is a nearly sparse vector, $A\in\R^{m\times N}$ is the linear measurement matrix, $y\in\R^m$ is the result of the measurement, and $w$ is a white noise vector affecting the measurements. However, in practice it is very uncommon to have a signal detected by a certain device, totally free from some external noise. Therefore, it is reasonable to consider the more realistic model
$$
y = A(x+n) + w,
$$
instead of \eqref{eq:enc} where $n\in\R^N$ is the noise on the original signal.\\

The recent work \cite{castro11,baraniuk09} shows how the measurement process actually causes the \textit{noise-folding phenomenon}, which implies that the variance of the noise on the original signal is amplified by a factor of $\frac{N}{m}$, additionally contributing to the measurement noise, playing to our disadvantage in the recovery phase. We refer also to the  papers \cite{WWS13,WWS14}, which seem to have independently reconsidered this problem more recently.
More formally, if we add to the signal $x$ a noise vector $n$, whose entries have normal distribution $\NN(0,\sigma_n)$,
the measurement $y$ given by
\begin{equation} \label{vectornoise}
y = A(x+n),
\end{equation}
can be considered equivalently obtained by a measurement procedure of the form \eqref{eq:enc} (possibly with another measurement matrix $A$ of equal statistics) where now the vector $w$ is composed by i.i.d. Gaussian entries with distribution $\NN(0,\sigma_w)$ and $\sigma_w^2=\frac{N}{m}\sigma_n^2$.\\
In this stochastic context, the so-called Dantzig selector has been analyzed in \cite{candes07} showing that the recovered signal $x^*$ from the measurement $y$ fulfills the following nearly-optimal distortion guarantees, under the assumption that $A$ satisfies the so-called restricted isometry property (compare Definition~\ref{def:rip}): 
\begin{equation}\label{dantzig}
\|x-x^*\|^2_{\ltwo} \leq C^2 \cdot 2\log{N} \cdot \left( \sigma_w^2 + \sum_{i=1}^N \min \{x_i^2,\sigma_w^2\} \right),
\end{equation}
which, for a sparse vector $x$ with at most $k$-nonzero entries, reduces to the following estimate
\begin{eqnarray*}
\|x-x^*\|^2_{\ltwo} &\leq& C^2 \cdot 2\log{N} \cdot \left( (1+k) \sigma_w^2\right)\\
&=&  C^2 \cdot 2\log{N} \cdot \left( (1+k) \frac{N}{m} \sigma_n^2 \right).
\end{eqnarray*}
Therefore, the noise-folding phenomenon may significantly reduce in practice the potential advantages of compressed sensing in terms of the trade-off between robustness and efficient compression (here represented by the factor $\frac{N}{m}$), with respect to other more traditional subsampling encoding methods \cite{DLB12}.\\
In the following, we wish to focus on two fundamental consequences of noise folding: the loss of accuracy in the recovery of the large entries of the original vector $x$, and the correct detection of their index support.\\
To this end, let us introduce for $r>\eta > 0$ and $1 \leq k < m$ the class of \emph{sparse vectors affected by bounded noise},  
\begin{equation}
\Ss_{\eta,k,r}^p\ldef\left\{x\in\R^N\big|\#S_r(x)\leq k \text{ and} \sum_{i\in(S_r(x))^c}|x_i|^p\leq\eta^p\right\}, \quad 1\leq p\leq 2,
\label{eq:Setakr}
\end{equation}
where $S_r(x)\ldef\left\{i\in\{\onetoN\}\right|\left.|x_i|>r \right\}$ is the index support of the large entries exceeding in absolute value the threshold $r$. This class contains all vectors for which at most $1 \leq k < m$ large entries exceed the threshold $r$ in absolute value, while the $p$-norm of the other entries stays below a certain noise level. To highlight the concrete modeling potential of such a class, in Figure \ref{fig:compvec} we show typical examples of real-life signals from Asteroseismology falling into it, for some choice of the parameters $\eta,k,r$.

\begin{figure}[!htb]
\begin{center}
\subfigure[Power density spectrum of the red giant candidate CoRoT-101034881 showing a frequency pattern with a regular spacing~\cite{astero1}.]{ 
\includegraphics[width=0.39\textwidth]{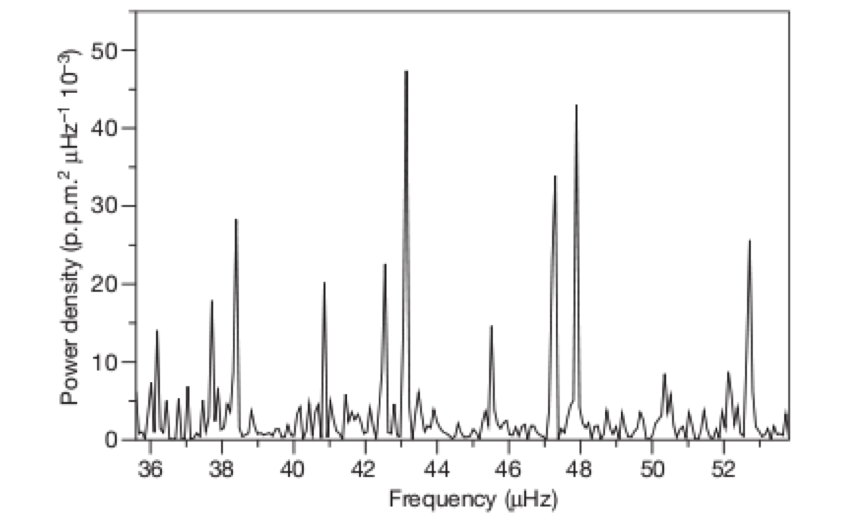}}
\hspace{1cm}\subfigure[Amplitude spectra of solar oscillations measured by the VIRGO instrument on SOHO. The figure shows only a portion of the solar oscillation spectrum~\cite{astero2}.]{
\includegraphics[width=0.5\textwidth]{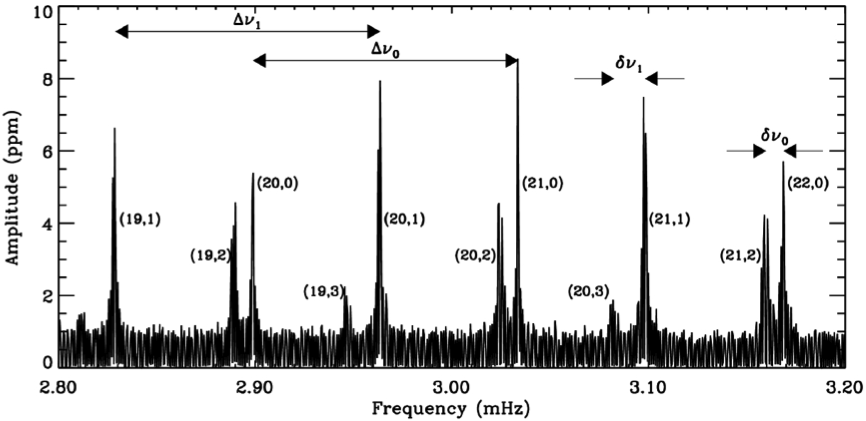}}
\end{center}
\caption{Examples of signals out of the class defined in \eqref{eq:Setakr}.}
\label{fig:compvec}
\end{figure}

Let us now informally explain how the class $\Ss_{\eta,k,r}^p$ can be crucially used to analyze the effects of noise folding in terms of support recovery depending on the parameters $\eta, r, k$. Therefore, assume now that $x$ is a sparse vector with at most $k$ nonzero entries exceeding the threshold $r$ in absolute value and $x+n \in \Ss_{\eta,k,r}^2$ in expectation (notice that  we specified here $p=2$). By the statistical equivalence mentioned above of the model \eqref{vectornoise} and \eqref{eq:enc}, we infer that the recovered vector $x^*$ by means of the Dantzig selector will fulfill the following error estimate:

\begin{align}\label{datzsparse}
\|x-x^*\|^2_{\ltwo} & \leq C^2 \cdot 2\log{N} \cdot \left( (1+k) \frac{N}{m}\sigma_n^2\right) \nonumber \\
& \leq C^2 \cdot 2\log{N} \cdot \left( (1+k) \frac{N}{m}\frac{\eta^2}{N-k}\right),
\end{align}
where the last inequality follows by the requirement 
$$
(N-k)\sigma_n^2 = \mathbb E \left (\sum_{i\in(S_r(x))^c}|n_i|^2 \right )\leq \eta^2,
$$
considering here $S_r(x)$ as previously defined in \eqref{eq:Setakr}. Since we assume that $k \ll N$ and $\frac{k+1}{m}\leq1$, the right-hand-side of \eqref{datzsparse} can be further bounded from above by
$$
C^2 \cdot 2\log{N} \cdot \left( (1+k) \frac{N}{m}\frac{\eta^2}{N-k}\right) \leq C_1^2 \cdot 2\log{N} \cdot \eta^2.
$$ 
It easily follows (and we will use similar arguments below for different decoding methods) that a sufficient condition for the identification of the support of $x$, i.e., $\supp(x) \subset \supp(x^*)$, is
$$
C_1^2 \cdot 2\log{N} \eta^2 < r^2,
$$
or, equivalently, 
$$
\eta < \frac{r}{C_1\sqrt{2\log{N}}}.
$$ 
Notice that such a sufficient condition on $\eta$ actually implies a rather large gap between the significant entries of $x$ and the noise components in $n$.
Hence, it is of utmost practical interest to understand how small this gap is actually allowed to be, i.e., how large $\eta$ can be relatively to $r$, for the most used recovery algorithms in compressed sensing (not only the Dantzig selector) to be able to have both support identification and a good approximation of the significant entries of $x$. 

An approach to control the noise folding, is proposed in \cite{castro11}. In this case, one may tune the linear measurement process in order to a priori filter the noise. However, this strategy requires to have a precise knowledge of the noise statistics and to design proper filters. Other related work \cite{HBCN09,HBCN12,HCN11} addresses the problem of designing adaptive measurements, called {\it distilled sensing}, in order to detect and locate the signal within white noise.
In this paper, we shall follow a \textit{blind-to-statistics} approach (although, as recalled below, we do not consider the scenario of impulsive noise), which does not modify the non-adaptive measurements, and, differently from the Dantzig selector analysis in \cite{candes07}, we restrict ourself to a purely deterministic setting.
\\

First of all, we show that, unfortunately, the classical $\lone$-minimization, but also the iteratively re-weighted $\ell_1$-minimization \cite{candesIRWL1,needellIRWL1}, considered one of the most robust in the field, easily fails in both the tasks mentioned above as soon as $\eta\approx r$. The deep reason of this failure is the lack of selectivity of these algorithms, which are designed to promote not only the sparsity of the signal $x$ but also of the recovered noise. This principle drawback affects necessarily also recent modifications of re-weighted $\ell_1$-minimization as appearing in \cite{KXAH10}.
This has the consequence,  as a sort of balancing principle, that the miscomputed components of the noise $n$, if it is not originally of impulsive nature, blow up the inaccuracy in the detection and approximation of $x$. (Notice that the problem of separating sparse signals within impulsive noise is much harder and it will not be considered within the scope of the present paper.)\\
To overcome these difficulties of these popular methods,  we propose a new decoding procedure, combining   $\ell_1$-minimization and regularized {\it selective least $p$-powers}, which is able to reduce the noise component affecting the signal and also to enhance the support identification. In fact, for certain applications, such as radar \cite{hurast12}, the support recovery can be even more relevant than an accurate estimate of the signal values. Although the analysis and the numerical results of this new procedure greatly
outperform $\ell_1$-minimization and its re-weighted iterative version, its computational complexity becomes prohibitive in very high-dimension. For we conclude by proposing and analyzing an eventual method, based on similar principles, which combines again a warm-up step based on $\ell_1$-minimization with a nonconvex optimization realized by the well-known iterative hard thresholding~\cite{blumensath2008iterative}, and a final  correction step realized by a convex optimization. We show that this latter method performs as robustly as the previous one, but with a drastic reduction of the complexity.
\\

The paper is organized as follows. In the next section, we concisely recall the pertinent features of the theory of compressive sensing. In Section~\ref{sec:SI}, we shall describe the limitations of $\lone$-minimization when noise on the signal is present, and we mention that very similarly an analogue analysis can be performed for the iteratively re-weighted $\lone$-minimization based on the results in \cite{needellIRWL1}. Afterwards, as an alternative, we propose the linearly constrained minimization of the regularized selective $p$-potential functional, and show that certain sufficient conditions for recovery indicate significantly better performance than the one provided by $\lone$-minimization and iteratively re-weighted $\ell_1$-minimization. Within this paper, we measure the \emph{performance} of a method by its ability to identify and approximate the relevant entries of the original signal, where the \emph{relevant entries} of a signal are the ones which exceed a predefined threshold $r$. For this purpose, we introduced the class \eqref{eq:Setakr}. With respect to the notation in the introduction, this class is changing the meaning of the notation $x$ throughout the rest of the paper and makes redundant the notation $x+n$, which emphasizes the fact that the noise is directly on the signal. Section~\ref{sec:MSmin} recalls the main properties of a very robust and efficient algorithm to perform the linearly constrained minimization of the regularized selective least $p$-powers. In Section~\ref{sec:selectivity}, we address the issue of the high computational cost of the regularized selective $p$-potential optimization and propose, exploiting a similar \emph{selectivity principle}, an alternative method based on iterative hard thresholding. Finally, in Section~\ref{sec:NR}, we report the results of extensive numerical experiments, which we made to illustrate and support our theoretical guarantees, and the comparisons between all the mentioned decoding methods.




\section{Compressive sensing}\label{sec:CS}
It is possible to uniquely and robustly identify the solution $x\in\R^N$ of the linear system $Ax=y$ for an arbitrary given measurement vector $y\in\R^m$, if $A\in\R^{m\times N}$ has rank $m$, and $m=N$. However, in many applications, we may be either not able to take enough measurements, or interested in taking much fewer measurements to save costs or time, i.e., $m \ll N$. The theory of compressive sensing studies this scenario under some restrictions, and assumes that the original signal $x$ is nearly sparse. In this section, we recall concisely terms and principles of this theory, and we refer to some of the known tutorials for more details \cite{ba07-2,cawa08,fora10,fora10-1}.\\

In compressive sensing, we call the matrix $A$ the \emph{encoder} which transforms the $N$-dimensional signal $x$ into to the \textit{measurement vector} $y \in \mathbb R^m$ of dimension $m \ll N$. Further, we assume $A$ to have rank $m$ from now on in this article. In practice, we do not know $x$ and wonder if it is possible to recover it somehow robustly by an efficient nonlinear \emph{decoder} $\Delta \colon\R^m\rightarrow\R^N$. As already mentioned, the theory only works if we assume the signal $x$ to be sparse or at least compressible.
\begin{definition}[$k$-sparse vector]
Let $k\in\N^+$, $k \leq N$. We call the vector $x\in\R^N$ \emph{$k$-sparse} if 
\[x\in\Sigma_k\ldef\left\{z\in\R^N|\#\supp(z) \leq k\right\}\]
where $\supp(z)\ldef\left\{i \in \{1,\ldots,N\} | z_i\neq0 \right\}$ denotes the \emph{support} of $z$. 
\end{definition}
In applications, signals are often not exactly sparse but at least compressible. We refer to~\cite{Mallat} for more details. We define compressibility in terms of the  \emph{best $k$-term approximation error} with respect to the $\ell_p$-norm, given by
$$
\|x\|_{\ell_p} = \left ( \sum_{i=1}^N |x_i|^p \right )^{1/p}, \quad 1 \leq p < \infty.
$$
\begin{definition}[Best $k$-term approximation]
Let $x$ be an arbitrary vector in $\R^N$. We denote the \emph{best k-term approximation} of $x$ by 
\[x_{[k]}\ldef\argmin\limits_{z\in\Sigma_k}\left\|x-z\right\|_{\lp},\qquad 1\leq p <\infty,\] 
and
the respective \emph{best $k$-term approximation error} of $x$ by 
\[\sigma_k(x)_{\lp}\ldef\min\limits_{z\in\Sigma_k}\left\|x-z\right\|_{\lp}= \left\|x-x_{[k]}\right\|_{\lp}.\]
\end{definition}
\begin{remark}
The best $k$-term approximation error is the minimal distance of $x$ to a $k$-sparse vector. Informally, vectors having a relatively small best $k$-term approximation error are considered to be {\it compressible}. 
\end{remark}
\begin{remark}
If we define the \emph{nonincreasing rearrangement} of $x$ by
\[r(x) = (|x_{i_1}|,\ldots,|x_{i_N}|)^T,\;\mathrm{ and }\;|x_{i_j}|\geq|x_{i_{j+1}}|,\; j=1,\ldots,N-1,\]
then 
\[\sigma_k(x)_{\lp}= \left(\sum\limits_{j=k+1}^N r_j(x)^p\right)^{\frac{1}{p}}, \quad 1\leq p < \infty.\]
Alternatively, we can describe the best-$k$-term approximation error by
\[\sigma_k(x)_{\lp}= \left(\sum\limits_{j\in\Lambda^c}|x_j|^p\right)^{\frac{1}{p}},\]
where $\Lambda\ldef \supp(x_{[k]})$, and $\Lambda^c$ is its complement in $\{1,\dots,N\}$.
\end{remark}
A desirable property of an encoder/decoder pair $(A,\Delta)$ is given by the following stability estimate, called \textit{instance optimality}
\begin{equation}
\label{eq:optimaldecoder}
\left\|x-\Delta(Ax)\right\|_{\lp}\leq C \sigma_k(x)_{\lp},
\end{equation}
for all $x\in\R^N$, with a positive constant $C$ independent of $x$, and $k$ the closest possible to $m$ \cite{codade09}. This would in particular imply that by means of $\Delta$ we are able to recover a $k$-sparse signal $x$ exactly, since in this case $\sigma_k(x)_{\lp}=0$. It turns out that the existence of such a pair 
restricts the range of  $k$ to be maximally of the order of $\frac{m}{\log\frac{m}{N}+1}$. We refer to \cite{badadewa08,codade09,do06} for more details.

Actually, the above mentioned condition~\eqref{eq:optimaldecoder} can be realized in practice, at least for $p=1$, by pairing the \emph{$\lone$-minimization} as the decoder with the choice of an encoder which has the so-called \emph{Null Space Property} of optimal order $k$. (For realizations of the instance optimality in other $\ell_p$-norms, for instance for $p=2$, one needs more restrictive requirements, see \cite{wo08}. For the analysis within this paper, we shall use \eqref{eq:optimaldecoder} just for $p=1$ for the sake of simplicity.)
\begin{definition}[Null Space Property]
A matrix $A\in\R^{m\times N}$ has the \emph{Null Space Property of order $k$ and for positive constant $\gamma_k>0$} if 
\[\vectornorm{z|_{\Lambda}}_{\lone}\leq\gamma_k\vectornorm{z|_{\Lambda^c}}_{\lone},\] 
for all $z\in\ker A$ and all $\Lambda\subonetoN$ such that $\#\Lambda\leq k$. We abbreviate this property with the writing \emph{$(k,\gamma_k)$-NSP}.
\end{definition}
The Null Space Property states that the kernel of the encoding matrix $A$ contains no vectors where some entries have a significantly larger magnitude with respect to the others. In particular, no compressible vector is contained in the kernel. This is a natural requirement since otherwise no decoder would be able to distinguish a sparse vector from zero.

\begin{lemma}
\label{lem:l1performance}
Let $A\in\R^{m\times N}$ have the $(k,\gamma_k)$-NSP, with $\gamma_k <1$, and define 
$$
\F(y)\ldef\{z\in\R^N|Az=y\},$$ 
the set of feasible vectors for the measurement vector $y\in\R^m$. Then the decoder
\begin{equation}\label{prob:l1}
\Delta_1(y)\ldef\argmin\limits_{z\in\F(y)}\vectornorm{z}_{\lone},
\end{equation}
which we call \emph{$\lone$-minimization}, performs
\begin{equation}
\label{eq:optimaldecoderl1}
\vectornorm{x-\Delta_1(y)}_{\lone}\leq C \sigma_k(x)_{\lone},
\end{equation}
 for all $x\in\F(y)$ and the constant $C\ldef\frac{2(1+\gamma_k)}{1-\gamma_k}$.
\end{lemma}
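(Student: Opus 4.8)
The plan is to prove this classical instance-optimality estimate for $\ell_1$-minimization directly from the Null Space Property, exploiting the minimality of $\Delta_1(y)$. Throughout, write $x^* \ldef \Delta_1(y)$ and let $\Lambda \ldef \supp(x_{[k]})$ be the index set of the $k$ largest (in magnitude) entries of $x$, so that by the last remark $\sigma_k(x)_{\ell_1} = \|x|_{\Lambda^c}\|_{\ell_1}$. The key structural observation is that since $x, x^* \in \F(y)$, their difference $z \ldef x - x^*$ lies in $\ker A$, so the $(k,\gamma_k)$-NSP applies to $z$ with this particular $\Lambda$ (which has $\#\Lambda \le k$), giving $\|z|_\Lambda\|_{\ell_1} \le \gamma_k \|z|_{\Lambda^c}\|_{\ell_1}$.

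\textbf{First} I would extract information from the minimality. Since $x^*$ is the $\ell_1$-minimizer over the feasible set and $x$ is feasible, we have $\|x^*\|_{\ell_1} \le \|x\|_{\ell_1}$. Splitting both sides over $\Lambda$ and $\Lambda^c$ and writing $x^* = x - z$, the plan is to manipulate $\|x - z\|_{\ell_1} \le \|x\|_{\ell_1}$ using the triangle inequality on $\Lambda$ (where $\|(x-z)|_\Lambda\|_{\ell_1} \ge \|x|_\Lambda\|_{\ell_1} - \|z|_\Lambda\|_{\ell_1}$) and the reverse triangle inequality on $\Lambda^c$ (where $\|(x-z)|_{\Lambda^c}\|_{\ell_1} \ge \|z|_{\Lambda^c}\|_{\ell_1} - \|x|_{\Lambda^c}\|_{\ell_1}$). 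After cancelling the $\|x|_\Lambda\|_{\ell_1}$ terms, this should yield a bound of the form $\|z|_{\Lambda^c}\|_{\ell_1} \le \|z|_\Lambda\|_{\ell_1} + 2\|x|_{\Lambda^c}\|_{\ell_1} = \|z|_\Lambda\|_{\ell_1} + 2\sigma_k(x)_{\ell_1}$.

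\textbf{Next} I would combine this with the NSP. Substituting $\|z|_\Lambda\|_{\ell_1} \le \gamma_k \|z|_{\Lambda^c}\|_{\ell_1}$ into the previous display and using $\gamma_k < 1$ to solve for $\|z|_{\Lambda^c}\|_{\ell_1}$ gives $\|z|_{\Lambda^c}\|_{\ell_1} \le \frac{2}{1-\gamma_k}\sigma_k(x)_{\ell_1}$. Then I would reconstruct the full norm: $\|z\|_{\ell_1} = \|z|_\Lambda\|_{\ell_1} + \|z|_{\Lambda^c}\|_{\ell_1} \le (1+\gamma_k)\|z|_{\Lambda^c}\|_{\ell_1}$, and inserting the previous bound yields exactly $\|x - x^*\|_{\ell_1} \le \frac{2(1+\gamma_k)}{1-\gamma_k}\sigma_k(x)_{\ell_1} = C\,\sigma_k(x)_{\ell_1}$, as claimed.

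\textbf{The main obstacle} is purely bookkeeping rather than conceptual: one must orient each triangle-inequality step in the correct direction so that the $\|x|_\Lambda\|_{\ell_1}$ contributions cancel and the surviving terms assemble into $\sigma_k(x)_{\ell_1}$ with the right constant. The only genuine requirement is $\gamma_k < 1$, which is exactly the hypothesis and is what makes the division legitimate and the constant finite. No restricted isometry or probabilistic argument is needed; the proof is a short deterministic chain of norm splittings feeding into the NSP.
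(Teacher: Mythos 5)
Your proposal is correct and follows essentially the same argument as the paper's own proof: the same splitting of the minimality inequality $\|x^*\|_{\ell_1}\leq\|x\|_{\ell_1}$ over $\Lambda$ and $\Lambda^c$, the same triangle-inequality bookkeeping yielding $\|z|_{\Lambda^c}\|_{\ell_1}\leq\|z|_\Lambda\|_{\ell_1}+2\sigma_k(x)_{\ell_1}$, and the same two applications of the $(k,\gamma_k)$-NSP to obtain first $\|z|_{\Lambda^c}\|_{\ell_1}\leq\frac{2}{1-\gamma_k}\sigma_k(x)_{\ell_1}$ and then the full bound with constant $\frac{2(1+\gamma_k)}{1-\gamma_k}$. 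The only difference is the immaterial sign convention $z=x-x^*$ versus the paper's $z=x^*-x$.
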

Although the result we stated here is by now well-known, see also \cite{MXAH08}, we report its short proof for the sake of completeness, and for comparison with the enhanced guarantees given in Theorem \ref{thm:mainth} below.

\begin{proof} Let us denote $x^* = \Delta_1(y)$ and $z = x^* - x$. Then $z \in \ker A$ and
$$
\|x^*\|_{\ell_1} \leq \|x\|_{\ell_1},
$$
because $x^*$ is a solution of the $\ell_1$-minimization problem \eqref{prob:l1}.
Let $\Lambda$ be the set of the $k$-largest entries of $x$ in absolute value. One has
$$
\|x^*|_\Lambda \|_{\ell_1} + \|x^*|_{\Lambda^c} \|_{\ell_1} \leq \|x|_\Lambda \|_{\ell_1} + \|x|_{\Lambda^c} \|_{\ell_1}.
$$
It follows immediately from the triangle inequality that
$$
\|x|_\Lambda \|_{\ell_1} -\|z|_\Lambda \|_{\ell_1} +\|z|_{\Lambda^c} \|_{\ell_1}  - \|x|_{\Lambda^c} \|_{\ell_1} \leq \|x|_\Lambda \|_{\ell_1} + \|x|_{\Lambda^c} \|_{\ell_1}.
$$
Hence, by the $(k,\gamma_k)$-NSP
$$
\|z|_{\Lambda^c} \|_{\ell_1}\leq \|z|_\Lambda \|_{\ell_1} +2 \|x|_{\Lambda^c} \|_{\ell_1} \leq 
\gamma_k \| z|_{\Lambda^c}\|_{\ell_1} + 2 \sigma_k(x)_{\ell_1}, 
$$
or, equivalently,
\begin{equation}
\label{eq:aux}
\|z|_{\Lambda^c} \|_{\ell_1} \leq 
\frac{2}{1-\gamma_k} \sigma_k(x)_{\ell_1}.
\end{equation}
Finally, again by  the $(k,\gamma_k)$-NSP
\begin{eqnarray*}
\|x - x^*\|_{\ell_1} = \|z|_\Lambda \|_{\ell_1} + \|z|_{\Lambda^c} \|_{\ell_1}
\leq (\gamma_k+1) \|z|_{\Lambda^c} \|_{\ell_1}
\leq \frac{2(1+\gamma_k)}{1-\gamma_k} \sigma_k(x)_{\ell_1},
\end{eqnarray*}
and the proof is completed.
\end{proof}

Unfortunately, the NSP is hard to verify in practice. Therefore one can introduce another property which is called the \emph{Restricted Isometry Property} which implies the NSP.
Being a spectral concentration property, the Restricted Isometry Property is particularly suited to be verified with high probability by certain random matrices; we
mention some instances of such classes of matrices below.
\begin{definition}[Restricted Isometry Property]
\label{def:rip}
A matrix $A\in\R^{m\times N}$ has the \emph{Restricted Isometry Property (RIP) of order $K$ with constant $0<\delta_K<1$} if
\[(1-\delta_{K})\vectornorm{z}_{\ltwo} \leq \vectornorm{Az}_{\ltwo} \leq (1+ \delta_{K})\vectornorm{z}_{\ltwo},\] 
for all $z\in\Sigma_K$. We refer to this property by \emph{$(K,\delta_K)$-RIP}.
\end{definition}
\begin{lemma}
Let $k,h\in\N^+$ and $K = k+h$. Assume that $A\in\R^{m\times N}$ has $(K,\delta_K)$-RIP. Then $A$ has $(k,\gamma_k)$-NSP, where
\[\gamma_k\ldef\sqrt{\frac{k}{h}}\frac{1+\delta_K}{1-\delta_K}.\]
\end{lemma}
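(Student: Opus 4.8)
The plan is to verify the defining inequality of the NSP directly: fix an arbitrary $z\in\ker A$ and an index set $\Lambda\subonetoN$ with $\#\Lambda\le k$, and show that $\vectornorm{z|_{\Lambda}}_{\lone}\le\gamma_k\vectornorm{z|_{\Lambda^c}}_{\lone}$. Since the NSP is an $\ell_1$ statement while the RIP controls $\ell_2$ norms, the first move is to pass to $\ell_2$ by Cauchy--Schwarz, $\vectornorm{z|_{\Lambda}}_{\lone}\le\sqrt{k}\,\vectornorm{z|_{\Lambda}}_{\ltwo}$, and then to bound $\vectornorm{z|_{\Lambda}}_{\ltwo}$ using that $z$ lies in the kernel of $A$.

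To exploit the kernel condition through the RIP, I would decompose the complement $\Lambda^c$ into consecutive blocks ordered by decreasing magnitude: let $\Lambda_1$ collect the $h$ largest entries of $z$ in absolute value on $\Lambda^c$, let $\Lambda_2$ collect the next $h$ largest, and so on. The crucial elementary estimate is that every entry of $z$ indexed by $\Lambda_{j+1}$ is dominated by the average magnitude on $\Lambda_j$, which yields the block-decay bound $\vectornorm{z|_{\Lambda_{j+1}}}_{\ltwo}\le h^{-1/2}\vectornorm{z|_{\Lambda_j}}_{\lone}$; summing over $j\ge1$ collapses the right-hand side into the full norm, $\sum_{j\ge2}\vectornorm{z|_{\Lambda_j}}_{\ltwo}\le h^{-1/2}\vectornorm{z|_{\Lambda^c}}_{\lone}$.

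For the main step, set $S\ldef\Lambda\cup\Lambda_1$, so that $\#S\le k+h=K$. From $Az=0$ one writes $A(z|_S)=-\sum_{j\ge2}A(z|_{\Lambda_j})$; applying the RIP lower bound to the left side (legitimate since $\#S\le K$) and the triangle inequality together with the RIP upper bound on each block (legitimate since each $\#\Lambda_j=h\le K$) gives $(1-\delta_K)\vectornorm{z|_S}_{\ltwo}\le(1+\delta_K)\sum_{j\ge2}\vectornorm{z|_{\Lambda_j}}_{\ltwo}$. Chaining this with the block-decay bound and with $\vectornorm{z|_{\Lambda}}_{\ltwo}\le\vectornorm{z|_S}_{\ltwo}$ produces $\vectornorm{z|_{\Lambda}}_{\ltwo}\le\frac{1+\delta_K}{1-\delta_K}h^{-1/2}\vectornorm{z|_{\Lambda^c}}_{\lone}$, and the initial Cauchy--Schwarz step then delivers precisely the factor $\sqrt{k/h}\,\frac{1+\delta_K}{1-\delta_K}=\gamma_k$.

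The routine parts are the two norm inequalities; the point demanding care is the cardinality bookkeeping that makes the RIP applicable, namely that the single ``head'' set $S=\Lambda\cup\Lambda_1$ has at most $K$ elements while every ``tail'' block has at most $h\le K$ elements, so that neither the lower bound on $z|_S$ nor the upper bounds on the $z|_{\Lambda_j}$ invoke the isometry constant beyond the order $K$ for which it is assumed. I expect this block-splitting, and the verification of the decay estimate via the averaging argument, to be the main (though standard) obstacle.
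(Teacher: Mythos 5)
Your proof is correct and complete. Note that the paper itself does not prove this lemma --- it defers to the reference \cite{fora10-1} --- and your argument is precisely the standard block-decomposition proof found there: Cauchy--Schwarz to pass from $\ell_1$ to $\ell_2$ on $\Lambda$, partition of $\Lambda^c$ into blocks of size $h$ ordered by decreasing magnitude, the averaging estimate $\vectornorm{z|_{\Lambda_{j+1}}}_{\ltwo}\leq h^{-1/2}\vectornorm{z|_{\Lambda_j}}_{\lone}$, and the kernel identity $A(z|_{S})=-\sum_{j\geq 2}A(z|_{\Lambda_j})$ with $S=\Lambda\cup\Lambda_1$ of cardinality at most $K$. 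Your cardinality bookkeeping is exactly what makes the RIP of order $K=k+h$ applicable, and you correctly use the paper's non-squared formulation of the RIP, which is why the constant comes out as $\frac{1+\delta_K}{1-\delta_K}$ rather than the square root of that ratio that the more common squared-norm RIP definition would produce.
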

The proof of this latter result can be found, for instance, in \cite{fora10-1}, and not being of specific  relevance for this paper we do not include it here, although it will be important at some points below to connect  the RIP to a corresponding NSP.
Encoders which have the RIP with optimal constants, i.e., with $k$ in the order of $\frac{m}{\log\frac{m}{N}+1}$ exist, but, so far, as mentioned above, they can be realized exclusively by randomization. By now, classical examples of stochastic encoders are i.i.d. Gaussian matrices \cite{do06} or discrete Fourier matrices with randomly chosen rows \cite{carota06-1}. Further details and generalizations are provided in \cite{fora10,ra09-1}. In the rest of the paper we will use as prototypical cases mainly such stochastic encoders.\\

A recent ansatz to increase the reconstruction accuracy of sparse vectors is iteratively re-weighted $\ell_1$-minimization~\cite{candesIRWL1,needellIRWL1}. The idea of this approach is to iteratively solve the weighted $\lone$-minimization
$$
z^{n+1}=\argmin\limits_{z\in\F(y)}\sum\limits_{i=1}^N w_i^n |z_i|,
$$
while updating the weights according to  $w_i^n = \left(|z_i^n|+a\right)^{-1}$ for all $i=1,\dots,N$, for a suitably chosen stability parameter $a>0$. We denote this decoder by $\Delta_{1\text{rew}}$, and recall the following respective instance optimality result.
\begin{lemma}[{\cite[Theorem 3.2]{needellIRWL1}}]
\label{lem:rewl1performance}
Let $A\in \R^{m\times N}$ have the $(2k,\delta_{2k})$-RIP, with $\delta_{2k} < \sqrt{2} - 1$, and assume the smallest nonzero coordinate of $x_{[k]}$ in absolute value larger than the threshold
\begin{equation}
\label{eq:rewr}
\bar{r}\ldef 9.6 \frac{\sqrt{1+\delta_{2k}}}{1-(\sqrt{2}+1)\delta_{2k}}\left(\sigma_k(x)_{\ell_2} +\frac{\sigma_k(x)_{\ell_1}}{\sqrt{k}}\right).
\end{equation}
Then the decoder $\Delta_{1\text{rew}}$ performs
\begin{equation}
\label{eq:optimaldecoderrewl1}
\vectornorm{x-\Delta_{1\text{rew}}(y)}_{\ltwo}\leq 4.8 \frac{\sqrt{1+\delta_{2k}}}{1+(\sqrt{2}-1)\delta_{2k}}\left(\sigma_k(x)_{\ell_2} +\frac{\sigma_k(x)_{\ell_1}}{\sqrt{k}}\right).
\end{equation}
\end{lemma}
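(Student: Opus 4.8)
The plan is to analyse the re-weighted scheme as a short sequence of \emph{weighted} $\ell_1$-recovery problems, and to show that the threshold hypothesis \eqref{eq:rewr} forces the weights to concentrate correctly on $\supp(x_{[k]})$ already after the first (unweighted) iteration, so that a single subsequent weighted solve upgrades the recovery constant from the plain-$\ell_1$ value to the improved one in \eqref{eq:optimaldecoderrewl1}. Two ingredients are needed: a weighted analogue of the RIP/NSP recovery estimate of Lemma~\ref{lem:l1performance}, and a support-detection argument exploiting the deliberate factor-two gap between $\br$ and the target error.

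First I would record the unweighted baseline. Since $A$ has $(2k,\delta_{2k})$-RIP with $\delta_{2k}<\sqrt2-1$, the sharp Candès-type estimate for $z^1=\Delta_1(y)$ takes the form
$$\|x-z^1\|_{\ltwo}\le \frac{4.8\sqrt{1+\delta_{2k}}}{1-(\sqrt2+1)\delta_{2k}}\left(\sigma_k(x)_{\ell_2}+\frac{\sigma_k(x)_{\ell_1}}{\sqrt k}\right)=\frac{\br}{2},$$
so the threshold \eqref{eq:rewr} is calibrated to be exactly twice the guaranteed first-step error. This is the whole point of the calibration: with $T\ldef\supp(x_{[k]})$, every entry of $x$ on $T$ exceeds $\br$ and is reconstructed by $z^1$ to within $\br/2$, so $|z^1_i|>\br/2$ on $T$ while $\|(z^1-x)|_{T^c}\|$ stays below $\br/2$; consequently the entries of $z^1$ indexed by $T$ dominate the rest and the support is correctly detected.

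The decisive step is the weighted recovery lemma. With $W^n=\mathrm{diag}(w^n)$ and the weights $w^n_i=(|z^n_i|+a)^{-1}$ small on $T$ and close to $1/a$ on $T^c$, I would mimic the proof of Lemma~\ref{lem:l1performance}: start from the optimality inequality $\|W^n z^{n+1}\|_{\ell_1}\le\|W^n x\|_{\ell_1}$, apply the triangle inequality to isolate the off-support part $\|(z^{n+1}-x)|_{T^c}\|_{\ell_1}$, establish the induced weighted (null-space-type) cone condition, and upgrade it to an $\ell_2$ bound via the RIP, as in the standard Candès–Romberg–Tao argument. Because the on-support weights are negligible, their contribution drops out and the error is driven only by the weighted tail $\sum_{i\in T^c} w^n_i |x_i|$; carrying the constants through replaces the denominator $1-(\sqrt2+1)\delta_{2k}$ by the smaller $1+(\sqrt2-1)\delta_{2k}$, which is precisely the gain recorded in \eqref{eq:optimaldecoderrewl1}.

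I expect the self-consistency of the weight dynamics to be the main obstacle. One must verify that the good-weights property (small on $T$, of order $1/a$ on $T^c$) is not merely produced by the first step but is \emph{reproduced} by every subsequent weighted solve, i.e.\ that the map sending one error level to the next has the target bound \eqref{eq:optimaldecoderrewl1} as an attracting fixed point; the surplus in the threshold (the ratio $\br/\|x-z^1\|_{\ltwo}$ being a little more than two) is what keeps detection and accuracy from becoming circularly unsatisfiable. A secondary delicate point is the choice of the stabilisation parameter $a$: it must be small enough that the on-support weights genuinely vanish in the limiting estimate, yet not so small that the off-support weights blow up and spoil the weighted NSP constant. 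Tracking these explicit thresholds on $a$ and $\delta_{2k}$, rather than any conceptual difficulty, is where the real work lies.
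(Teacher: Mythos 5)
You should first be aware that the paper contains no proof of this statement: Lemma~\ref{lem:rewl1performance} is imported verbatim (with measurement noise set to zero) from \cite[Theorem 3.2]{needellIRWL1}, so there is no in-paper argument to compare yours against; the only meaningful benchmark is the proof in that cited source. Measured against it, your plan reconstructs the right architecture and you have decoded the constants correctly: writing $\rho=\sqrt{2}\,\delta_{2k}/(1-\delta_{2k})$ and $\alpha=2\sqrt{1+\delta_{2k}}/(1-\delta_{2k})$, the unweighted first iterate obeys a Cand\`es-type bound with constant $1.2\cdot 2\alpha/(1-\rho)=4.8\sqrt{1+\delta_{2k}}/(1-(\sqrt{2}+1)\delta_{2k})$, which is exactly $\br/2$, while the claimed limiting constant in \eqref{eq:optimaldecoderrewl1} is $1.2\cdot 2\alpha/(1+\rho)$; the passage from the denominator $1-\rho$ to $1+\rho$ through weighted solves, and the error recursion with an attracting fixed point, are indeed how the cited proof is organized.

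As a proof, however, the proposal has genuine gaps, and they sit exactly where all the technical work lies. First, the weighted recovery lemma is asserted rather than proved: the sentence claiming that ``carrying the constants through'' replaces $1-(\sqrt{2}+1)\delta_{2k}$ by $1+(\sqrt{2}-1)\delta_{2k}$ \emph{is} the theorem; one must rerun the full RIP argument with the weighted optimality inequality and show why nearly vanishing weights on $T=\supp(x_{[k]})$ cause the cross term $\rho\vectornorm{h}_{\ltwo}$, which is subtracted in the unweighted estimate, to effectively change sign in the final inequality --- this cannot be waved through. Second, the fixed-point argument is stated as a desideratum, not carried out: one needs the explicit recursion $E_{n+1}\le\varphi(E_n)$, whose coefficients involve the gap between $\br$ and the current error together with the stabilization parameter $a$, a verification that the threshold hypothesis \eqref{eq:rewr} keeps $\varphi$ well defined and contracting at every iteration, and a computation showing its fixed point is dominated by \eqref{eq:optimaldecoderrewl1}. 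A smaller soft spot: your ``domination'' claim does not strictly follow from your bounds, since off-support entries of $z^1$ can be as large as $\br/2+\sigma_k(x)_{\ell_2}$ while on-support entries are only guaranteed to exceed $\br/2$; this is harmless, though, because the weighted lemma does not need exact support detection, only the quantitative smallness of the weights on $T$, which your per-entry estimate $|z^1_i|>\br/2$ on $T$ does provide. In sum: a correct and well-calibrated plan that mirrors the cited source, but the two central lemmas are deferred, so it is not yet a proof.
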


What we recalled up to this point of the theory of compressive sensing tells us that we are able to recover by (iteratively re-weighted) $\ell_1$-minimization compressible vectors within a certain accuracy, given by \eqref{eq:optimaldecoderl1} or \eqref{eq:optimaldecoderrewl1} respectively. If we re-interpret compressible vectors as sparse vectors which are corrupted by noise, we immediately see that the accuracy of the recovered solution is basically driven by the noise level affecting the vector. Nevertheless, neither inequalities~\eqref{eq:optimaldecoderl1} and \eqref{eq:optimaldecoderrewl1} tell us immediately if the recovered support of the $k$ largest entries of the decoded vector in absolute value is the same as the one of the original signal nor are we able to identify the large entries exceeding a given threshold in absolute value. Section~\ref{sec:supportidentification} addresses these issues in detail and investigates the limitations of $\lone$-minimization and iteratively re-weighted $\lone$-minimization. 
\\
Furthermore, we propose a new decoder, which is able to outperform $\lone$-minimization and iteratively re-weighted $\lone$-minimization in terms of  simultaneously identifying the exact position of the significant entries and reducing the noise level on the signal.

\section{Support identification}\label{sec:SI}
\label{sec:supportidentification}
We have seen in Lemma~\ref{lem:l1performance} that sparse vectors can be recovered exactly by the ${\ell_1}$--minimization decoder $\Delta_1$ if the matrix has the NSP. Moreover, a sparse signal which is disturbed by noise is recovered within a certain accuracy depending on the best $k$-term approximation error. In this section, we investigate in detail the noise level we can tolerate without loosing the ability of $\lone$-minimization to recover the support of the undisturbed sparse signal.\\ 

For later use, let us denote, for $1 \leq p \leq 2$ and $q$ such that $\frac{1}{p}+\frac{1}{q}=1$,
\begin{equation}
\label{eq:kappap}
\kappap\ldef\kappap(N,k)\ldef\begin{cases}1, & p=1, \\ \sqrt[q]{N-k}, & 1<p\leq 2.\end{cases}
\end{equation} 

\subsection{The $\lone$-minimization  result}
\label{sec:suppidl1}
The following simple proposition shows how we can recover the support of the original signal if we know the $\lone$-minimizer. It turns out that the large entries of the original signal in absolute value need to exceed a certain threshold, which depends on the noise level.
\begin{theorem}\label{thm:l1guar}
Let $x\in\R^N$ be a noisy signal with $k$ relevant entries and the noise level $\eta\in\R$, $\eta\geq 0$, i.e., for $\Lambda = \supp( x_{[k]})$,
\begin{equation}
\label{eq:noiselevel}                                                                                                                                                                                                                                                                                                                                                                                                                                                                               \sum\limits_{j\in\Lambda^c} |x_j|^p\leq\eta^p,
\end{equation}
for a \textit{fixed} $1\leq p \leq 2$. Consider further an encoder $A\in\R^{m\times N}$ which has the $(k,\gamma_k)$-NSP, with $\gamma_k <1$, the respective measurement vector $y = Ax\in\R^m$, and the $\lone$-minimizer
\[x^*\ldef\argmin\limits_{z\in\F(y)}\vectornorm{z}_{\lone}.\]
If the $i$-th component of the original signal $x$ is such that
\begin{equation}
\label{gapcond1}
|x_i|>\frac{2(1+\gamma_k)}{1-\gamma_k}\kappap\;\eta,
\end{equation}
then $i\in\supp(x^*)$.
\end{theorem}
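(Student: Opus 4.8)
The plan is to show that the $i$-th coordinate of $x^*$ cannot vanish by controlling how far $x^*$ can drift away from $x$ in a single coordinate. Since $y = Ax$ we have $x\in\F(y)$, and because $A$ has the $(k,\gamma_k)$-NSP with $\gamma_k<1$, Lemma~\ref{lem:l1performance} applies verbatim to give the instance-optimality estimate
$$
\vectornorm{x-x^*}_{\lone}\leq\frac{2(1+\gamma_k)}{1-\gamma_k}\,\sigma_k(x)_{\lone}.
$$
The whole argument then rests on two reductions: translating the hypothesis~\eqref{eq:noiselevel}, which is phrased in the $\lp$-norm, into a bound on the $\lone$-approximation error $\sigma_k(x)_{\lone}$ on the right-hand side, and passing from the global $\lone$-distance to the single-coordinate distance $|x_i-x_i^*|$.

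Next I would bound $\sigma_k(x)_{\lone}$. Recall $\sigma_k(x)_{\lone}=\vectornorm{x|_{\Lambda^c}}_{\lone}=\sum_{j\in\Lambda^c}|x_j|$ with $\Lambda=\supp(x_{[k]})$. For $p=1$ this is exactly the left-hand side of~\eqref{eq:noiselevel}, so $\sigma_k(x)_{\lone}\leq\eta=\kappap\,\eta$ since $\kappap=1$ in this case. For $1<p\leq 2$ I would apply H\"older's inequality with conjugate exponents $p,q$ over the indices of $\Lambda^c$:
$$
\sum_{j\in\Lambda^c}|x_j|\leq(N-k)^{1/q}\Big(\sum_{j\in\Lambda^c}|x_j|^p\Big)^{1/p}\leq\sqrt[q]{N-k}\;\eta=\kappap\,\eta.
$$
Here one should note a mild case distinction: if $x$ has strictly fewer than $k$ nonzero entries then $|\Lambda|<k$ but $x|_{\Lambda^c}=0$, so $\sigma_k(x)_{\lone}=0$ and the bound holds trivially; otherwise $|\Lambda|=k$, hence $\#\Lambda^c=N-k$, yielding exactly the constant $\kappap$. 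In either case $\sigma_k(x)_{\lone}\leq\kappap\,\eta$.

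Combining the two displays gives $\vectornorm{x-x^*}_{\lone}\leq\frac{2(1+\gamma_k)}{1-\gamma_k}\kappap\,\eta$. Finally, since $|x_i-x_i^*|\leq\vectornorm{x-x^*}_{\lone}$ for every coordinate, the reverse triangle inequality yields
$$
|x_i^*|\geq|x_i|-|x_i-x_i^*|\geq|x_i|-\frac{2(1+\gamma_k)}{1-\gamma_k}\kappap\,\eta>0
$$
precisely under the gap hypothesis~\eqref{gapcond1}, whence $x_i^*\neq 0$ and $i\in\supp(x^*)$. The argument is essentially routine once Lemma~\ref{lem:l1performance} is in place; the only genuine step is the H\"older estimate converting the $\lp$-noise budget into the $\lone$-error, and the sole point requiring a word of care is the bookkeeping on $\#\Lambda^c$ in the degenerate case where the signal has fewer than $k$ nonzero entries.
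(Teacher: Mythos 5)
Your proof is correct and follows essentially the same route as the paper's: both invoke Lemma~\ref{lem:l1performance}, convert the $\ell_p$-noise bound \eqref{eq:noiselevel} into $\sigma_k(x)_{\ell_1}\leq\kappa_p\,\eta$ via H\"older's inequality, and then pass to the single coordinate via $|x_i-x_i^*|\leq\|x-x^*\|_{\ell_1}$. The only cosmetic differences are that you conclude with the reverse triangle inequality where the paper argues by contradiction, and that you spell out the degenerate case $\#\Lambda<k$, which the paper leaves implicit.
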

\begin{proof}
We know by~\eqref{eq:optimaldecoderl1} that 
\begin{equation}
\label{eq:carot}
\vectornorm{x^* - x}_{\lone} \leq \frac{2(1+\gamma_k)}{1-\gamma_k} \sigma_k(x)_{\lone}.
\end{equation}
Thus, by H\"older's inequality and the assumption~\eqref{eq:noiselevel}, we obtain the estimate
\begin{equation}
\label{eq:potato}
\vectornorm{x^* - x}_{\lone} \leq\frac{2(1+\gamma_k)}{1-\gamma_k}\sigma_k(x)_{\lone} \leq \frac{2(1+\gamma_k)}{1-\gamma_k}\kappap\;\eta.
\end{equation}
We now choose a component $i\in\{\onetoN\}$ such that 
\[|x_i|>\frac{2(1+\gamma_k)}{1-\gamma_k}\kappap\;\eta,\]
and assume $i\notin \supp(x^*)$. This leads to the contradiction:
\begin{equation}
|x_i| = |x_i - x_i^*| \leq \vectornorm{x -x^*}_{\lone} \leq \frac{2(1+\gamma_k)}{1-\gamma_k} \kappap\; \eta < |x_i|.
\end{equation}
Hence, necessarily $i\in \supp(x^*)$.
\end{proof}


The noise level substantially influences the ability of support identification. Here, the noisy signal should have (as a sufficient condition) the $k$ largest entries in absolute value above 
$$
r_1\ldef \frac{2(1+\gamma_k)}{1-\gamma_{k}} \kappap\; \eta,
$$
in order to guarantee support identification.

\subsection{The result for iteratively re-weighted $\ell_1$-minimization}
We are able also in the case of the iteratively re-weighted $\ell_1$-minimization to show a similar support identification result, which follows similarly from Lemma~\ref{lem:rewl1performance}. 
\begin{theorem}\label{thm:rewl1guar}
Let $x\in\R^N$ be a noisy signal with $k$ relevant entries and the noise level $\eta\in\R$, $\eta\geq 0$, i.e., for $\Lambda = \supp(x_{[k]})$,
\begin{equation}
\label{eq:rewnoiselevel}                                                                                                                                                                                                                                                                                                                                                                                                                                                                               \sum\limits_{j\in\Lambda^c} |x_j|^p\leq\eta^p,
\end{equation}
for a \textit{fixed} $1\leq p \leq 2$. Consider further an encoder $A\in\R^{m\times N}$ which has the $(2k,\delta_{2k})$-RIP, with $\delta_{2k} <\sqrt{2}-1$, the respective measurement vector $y = Ax\in\R^m$, and the iteratively re-weighted $\lone$-minimizer $x^*\ldef\Delta_{1\text{rew}}(y)$.
If for all $i\in \supp(x_{[k]})$ 
\begin{equation}
\label{gapcond2}
|x_i|>9.6\frac{\sqrt{1+\delta_{2k}}}{1-(\sqrt{2}+1)\delta_{2k}}\left(1+\frac{\kappap}{\sqrt{k}}\right)\;\eta,
\end{equation}
then $\supp(x_{[k]})\subset\supp(x^*)$.
\end{theorem}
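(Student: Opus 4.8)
The plan is to mirror the proof of Theorem~\ref{thm:l1guar}, replacing the $\lone$-instance optimality of Lemma~\ref{lem:l1performance} by the $\ltwo$-recovery guarantee of Lemma~\ref{lem:rewl1performance}, and then to close the argument by the same contradiction device. First I would translate the deterministic noise-level hypothesis \eqref{eq:rewnoiselevel} into bounds on the two best $k$-term errors that appear both in the threshold \eqref{eq:rewr} and in the error bound \eqref{eq:optimaldecoderrewl1}. Writing $\Lambda=\supp(x_{[k]})$, the assumption reads $\sigma_k(x)_{\lp}=\left(\sum_{j\in\Lambda^c}|x_j|^p\right)^{1/p}\leq\eta$. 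Since $p\leq 2$, monotonicity of the $\ell_p$-norms in $p$ gives $\sigma_k(x)_{\ltwo}\leq\sigma_k(x)_{\lp}\leq\eta$, while H\"older's inequality with conjugate exponent $q$ over the index set $\Lambda^c$ of size $N-k$ yields $\sigma_k(x)_{\lone}\leq(N-k)^{1/q}\sigma_k(x)_{\lp}\leq\kappap\,\eta$, the last step being exactly the definition \eqref{eq:kappap} of $\kappap$ (including the degenerate case $p=1$, where $\kappap=1$). Combining the two estimates, both the threshold quantity in \eqref{eq:rewr} and the error quantity in \eqref{eq:optimaldecoderrewl1} are dominated by $\left(1+\frac{\kappap}{\sqrt{k}}\right)\eta$.

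With this translation in hand, I would next verify that the hypothesis of Lemma~\ref{lem:rewl1performance} is satisfied. The smallest nonzero coordinate of $x_{[k]}$ in absolute value is $\min_{i\in\Lambda}|x_i|$, and assumption \eqref{gapcond2} forces every entry indexed by $\Lambda$ to exceed $9.6\frac{\sqrt{1+\delta_{2k}}}{1-(\sqrt{2}+1)\delta_{2k}}\left(1+\frac{\kappap}{\sqrt{k}}\right)\eta$, which by the bound of the previous step is at least the threshold $\br$ of \eqref{eq:rewr}. Hence Lemma~\ref{lem:rewl1performance} applies to $x^*=\Delta_{1\text{rew}}(y)$ and, using the same domination by $\left(1+\frac{\kappap}{\sqrt{k}}\right)\eta$, delivers
\[
\vectornorm{x-x^*}_{\ltwo}\leq 4.8\frac{\sqrt{1+\delta_{2k}}}{1+(\sqrt{2}-1)\delta_{2k}}\left(1+\frac{\kappap}{\sqrt{k}}\right)\eta.
\]

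Finally I would run the contradiction argument. Fix $i\in\supp(x_{[k]})$ and suppose $i\notin\supp(x^*)$, i.e. $x_i^*=0$. Then $|x_i|=|x_i-x_i^*|\leq\vectornorm{x-x^*}_{\ltwo}$, so the displayed estimate bounds $|x_i|$ above by $4.8\frac{\sqrt{1+\delta_{2k}}}{1+(\sqrt{2}-1)\delta_{2k}}\left(1+\frac{\kappap}{\sqrt{k}}\right)\eta$. To contradict \eqref{gapcond2} it then suffices to check that the recovery prefactor never exceeds the threshold prefactor, i.e. $\frac{4.8}{1+(\sqrt{2}-1)\delta_{2k}}\leq\frac{9.6}{1-(\sqrt{2}+1)\delta_{2k}}$ for all admissible $0<\delta_{2k}<\sqrt{2}-1$; both denominators being positive, cross-multiplying reduces this to $4.8+\left(14.4\sqrt{2}-4.8\right)\delta_{2k}\geq 0$, which clearly holds. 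This yields the impossible chain $|x_i|\leq 4.8(\cdots)\eta<9.6(\cdots)\eta<|x_i|$, so in fact $i\in\supp(x^*)$; since $i\in\supp(x_{[k]})$ was arbitrary, $\supp(x_{[k]})\subset\supp(x^*)$.

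I expect the only genuinely delicate point to be the bookkeeping of the two competing constants: Lemma~\ref{lem:rewl1performance} states its recovery threshold and its error bound with \emph{different} prefactors (denominators $1-(\sqrt{2}+1)\delta_{2k}$ versus $1+(\sqrt{2}-1)\delta_{2k}$, and constants $9.6$ versus $4.8$), so one must confirm that the gap imposed in \eqref{gapcond2} indeed sits strictly above the recovery error for every admissible $\delta_{2k}$. Everything else is a routine transcription of the $\lone$ argument of Theorem~\ref{thm:l1guar}, together with the norm comparisons that identify the $(1+\kappap/\sqrt{k})\eta$ factor.
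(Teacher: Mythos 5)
Your proposal is correct and follows essentially the same route as the paper's proof: the same norm comparisons ($\sigma_k(x)_{\ltwo}\leq\sigma_k(x)_{\lp}\leq\eta$ by monotonicity and $\sigma_k(x)_{\lone}\leq\kappap\,\eta$ by H\"older) to verify the hypothesis of Lemma~\ref{lem:rewl1performance}, followed by the identical contradiction argument on a missing support index. The only cosmetic difference is that you verify the inequality between the $4.8/(1+(\sqrt{2}-1)\delta_{2k})$ and $9.6/(1-(\sqrt{2}+1)\delta_{2k})$ prefactors explicitly by cross-multiplication, whereas the paper absorbs this comparison silently into its chain of inequalities ending in $\bar{r}<|x_i|$.
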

\begin{proof}
First, notice that $\eta \geq \sqrt[p]{\sum\limits_{j\in\Lambda^c} |x_j|^p} = \sigma_k(x)_{\ell_p} \geq \sigma_k(x)_{\ell_2}$ and $ \kappap\sigma_k(x)_{\ell_p} \geq \sigma_k(x)_{\ell_1}$ by H\"older's inequality. Thus, we have for all $i\in \supp(x_{[k]})$ that
\begin{align*}
|x_i| &>9.6\frac{\sqrt{1+\delta_{2k}}}{1-(\sqrt{2}+1)\delta_{2k}}\left(1+\frac{\kappap}{\sqrt{k}}\right)\;\eta \geq
9.6\frac{\sqrt{1+\delta_{2k}}}{1-(\sqrt{2}+1)\delta_{2k}}\left( \sigma_k(x)_{\ell_p} +\frac{\kappap}{\sqrt{k}} \sigma_k(x)_{\ell_p} \right) \\
&\geq 9.6\frac{\sqrt{1+\delta_{2k}}}{1-(\sqrt{2}+1)\delta_{2k}}\left( \sigma_k(x)_{\ell_2} +\frac{\sigma_k(x)_{\ell_1}}{\sqrt{k} }\right) .
\end{align*}
Consequently, we fulfill the conditions of Lemma~\ref{lem:rewl1performance} for which, for all $i\in \supp(x_{[k]})$, $|x_i| > \bar{r}$, as defined in~\eqref{eq:rewr}. \\

Assume now that there is $i\in \supp(x_{[k]})$ and $i\notin \supp(x^*)$. By means of Lemma~\ref{lem:rewl1performance}, we obtain the contradiction
\begin{align}
|x_i| &= |x_i-x_i^*| \leq \vectornorm{x-x^*}_{\ltwo}\leq 4.8 \frac{\sqrt{1+\delta_{2k}}}{1+(\sqrt{2}-1)\delta_{2k}}\left(\sigma_k(x)_{\ell_2} +\frac{\sigma_k(x)_{\ell_1}}{\sqrt{k}}\right) \nonumber \\
 &\leq 9.6\frac{\sqrt{1+\delta_{2k}}}{1-(\sqrt{2}+1)\delta_{2k}}\left( \sigma_k(x)_{\ell_2} +\frac{\sigma_k(x)_{\ell_1}}{\sqrt{k} }\right) = \bar{r} < |x_i|. \label{rwl1suppid}
\end{align}
Hence,  $i\in \supp(x^*)$.
\end{proof} 

As negative aspects $\Delta_{1\text{rew}}$, we notice that, in view of \eqref{rwl1suppid} already the conditions of Lemma~\ref{lem:rewl1performance} for the stable convergence of the algorithm do imply support identification. 
Moreover, we stress that Theorem \ref{thm:rewl1guar} requires the RIP, which is in general a stronger condition than the NSP. Again, one observes that the noise level influences the ability of support identification. Here, the noisy signal should have the $k$ largest entries in absolute value above 
$$
r_{1\text{rew}}\ldef 9.6\frac{\sqrt{1+\delta_{2k}}}{1-(\sqrt{2}+1)\delta_{2k}}\left(1+\frac{\kappap}{\sqrt{k}}\right)\;\eta,
$$
in order to guarantee support identification.\\

In the following, we shall show that for the class $\Ss_{\eta,k,r}^p$, as defined in~\eqref{eq:Setakr}, a smaller threshold is required for support identification, under the sole request of the NSP (and not of the RIP).

\subsection{Support identification stability in the class $\Ss_{\eta,k,r}^p$}
\label{sec:signalclass}

In this section, we present results in terms of support discrepancy once we consider two elements of the class $\Ss_{\eta,k,r}^p$ defined in \eqref{eq:Setakr}, having the same measurements, i.e., they are both in $\F(y)$ for $y \in \mathbb R^m$. 
\begin{theorem}
\label{thm:suppid}
Let $A \in\R^{m\times N}$ have the $(2k,\gamma_{2k})$-NSP, for $\gamma_{2k} <1$, $1\leq p \leq 2$, and $x,x'\in\Ss_{\eta,k,r}^p$ such that $Ax=Ax'$, and  $0\leq \eta<r$. 
Then
\begin{equation}\label{eq:stabsupp}
\#(S_r(x)\Delta S_r(x'))\leq\frac{(2\gamma_{2k}\kappap\eta)^p}{(r-\eta)^p}.
\end{equation}
(Here we denote by ``$\Delta$'' the set symmetric difference, not to be confused with the symbol of a generic decoder.)
If additionally
\begin{equation}\label{eq:thrs2}
r> \eta(1+2\gamma_{2k}\kappap)\rdef r_S,
\end{equation}
then $S_r(x) = S_r(x')$, i.e., we have unique identification of the large entries in absolute value.
\end{theorem}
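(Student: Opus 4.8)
The plan is to pass to the difference $z\ldef x-x'$, which lies in $\ker A$ because $Ax=Ax'$, and to sandwich $\vectornorm{z|_D}_{\lp}$ over the symmetric difference $D\ldef S_r(x)\,\Delta\,S_r(x')$ between a pointwise gap estimate from below and a Null-Space-Property estimate from above. Comparing the two bounds gives \eqref{eq:stabsupp} at once, and the sharper conclusion \eqref{eq:thrs2} follows by forcing the resulting bound on $\#D$ below $1$.

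First I would prove the lower bound. Fix $i\in D$. If $i\in S_r(x)\setminus S_r(x')$, then $|x_i|>r$ by definition of $S_r$, while $i\in(S_r(x'))^c$ forces $|x_i'|^p\leq\sum_{j\in(S_r(x'))^c}|x_j'|^p\leq\eta^p$, hence $|x_i'|\leq\eta$; the case $i\in S_r(x')\setminus S_r(x)$ is symmetric. In either case the reverse triangle inequality yields $|z_i|\geq|x_i|-|x_i'|>r-\eta>0$, and summing $p$-th powers over $D$ gives $\vectornorm{z|_D}_{\lp}^p>\#D\,(r-\eta)^p$.

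For the upper bound I would feed the NSP a set of cardinality at most $2k$ that still contains $D$. Let $\Lambda_x$ consist of the indices of the $k$ largest entries of $x$ in modulus (padded to exactly $k$ indices) and define $\Lambda_{x'}$ analogously; since $\#S_r(x)\leq k$ every index exceeding $r$ is among the $k$ largest, so $S_r(x)\subseteq\Lambda_x$ and likewise $S_r(x')\subseteq\Lambda_{x'}$, whence $D\subseteq\Lambda\ldef\Lambda_x\cup\Lambda_{x'}$ with $\#\Lambda\leq 2k$. Applying the $(2k,\gamma_{2k})$-NSP to $z$ with this $\Lambda$ and using $D\subseteq\Lambda$ gives $\vectornorm{z|_D}_{\lone}\leq\vectornorm{z|_\Lambda}_{\lone}\leq\gamma_{2k}\vectornorm{z|_{\Lambda^c}}_{\lone}$. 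Since $\Lambda^c\subseteq\Lambda_x^c$ with $\#\Lambda_x^c=N-k$, and $\Lambda_x^c\subseteq(S_r(x))^c$ so that $\sum_{i\in\Lambda_x^c}|x_i|^p\leq\eta^p$, the triangle and Hölder inequalities give $\sum_{i\in\Lambda^c}|x_i|\leq\sum_{i\in\Lambda_x^c}|x_i|\leq\kappap\bigl(\sum_{i\in\Lambda_x^c}|x_i|^p\bigr)^{1/p}\leq\kappap\eta$, and symmetrically $\sum_{i\in\Lambda^c}|x_i'|\leq\kappap\eta$, so $\vectornorm{z|_{\Lambda^c}}_{\lone}\leq 2\kappap\eta$. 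Combining this with $\vectornorm{z|_D}_{\lp}\leq\vectornorm{z|_D}_{\lone}$ (valid for $1\leq p\leq 2$) produces $\vectornorm{z|_D}_{\lp}\leq 2\gamma_{2k}\kappap\eta$.

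Chaining the lower and upper bounds yields $\#D\,(r-\eta)^p<(2\gamma_{2k}\kappap\eta)^p$, which is precisely \eqref{eq:stabsupp}; and under \eqref{eq:thrs2} one has $2\gamma_{2k}\kappap\eta<r-\eta$, so that $(\#D)^{1/p}(r-\eta)<r-\eta$ forces $\#D<1$, i.e.\ $D=\emptyset$ and $S_r(x)=S_r(x')$. I expect the only genuine subtlety to be the bookkeeping of the Hölder constant: one must run the NSP on a set of size $\leq 2k$ containing $D$ while bounding each tail on a reference support of size \emph{exactly} $k$, so that the conjugate-exponent factor collapses to $\kappap=\sqrt[q]{N-k}$; working with the true complement $(S_r(x))^c$, whose cardinality may exceed $N-k$, would inflate this constant. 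The gap step and the $\lp\leq\lone$ comparison are by contrast entirely routine.
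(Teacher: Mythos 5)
Your proof is correct, and its skeleton coincides with the paper's: pass to $z=x-x'\in\ker A$, bound the restriction of $z$ to the symmetric difference from above via the NSP combined with H\"older and the triangle inequality, bound it from below by the pointwise gap $r-\eta$, and compare; unique identification then follows by forcing the resulting count below $1$. The one genuine difference is the index set you feed to the NSP, and it is a real refinement. The paper applies the NSP directly with $\Lambda=S_r(x)\cup S_r(x')$ and then invokes H\"older on $\Lambda^c$ with the constant $\kappap=\sqrt[q]{N-k}$; strictly speaking this is justified only when $\#(S_r(x)\cup S_r(x'))\geq k$, since the class $\Ss_{\eta,k,r}^p$ only guarantees $\#S_r\leq k$, so $\Lambda^c$ may contain more than $N-k$ indices and the literal H\"older factor would then exceed $\kappap$. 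Your padding of $S_r(x)$ and $S_r(x')$ to the $k$-largest-entry supports $\Lambda_x,\Lambda_{x'}$, followed by estimating each tail separately on a set of cardinality exactly $N-k$, removes this edge case and legitimizes the constant $\kappap$ for all admissible $x,x'$, at no cost: the NSP set still has cardinality at most $2k$, so the hypothesis $(2k,\gamma_{2k})$-NSP suffices exactly as in the paper. Two negligible remarks on strictness: your inequality $\vectornorm{z|_D}_{\lp}^p>\#D\,(r-\eta)^p$ presupposes $D\neq\emptyset$ (if $D=\emptyset$, both \eqref{eq:stabsupp} and the final conclusion are trivial, so nothing is lost), and the non-strict form is all that \eqref{eq:stabsupp} requires; likewise the deduction of $S_r(x)=S_r(x')$ from \eqref{eq:thrs2} is the same integrality argument the paper uses.
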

\begin{proof}
As $Ax=Ax'$, the difference $(x-x')\in \ker(A)$. By the $(2k,\gamma_{2k})$-NSP, H\"older's inequality, and the triangle inequality we have
\begin{align}
\nonumber\vectornorm{(x-x')|_{S_r(x)\cup S_r(x')}}_{\lp}&\leq\vectornorm{(x-x')|_{S_r(x)\cup S_r(x')}}_{\lone}\\
\nonumber&\leq\gamma_{2k}\vectornorm{(x-x')|_{(S_r(x)\cup S_r(x'))^c}}_{\lone} \\
\nonumber&\leq\gamma_{2k}\kappap\vectornorm{(x-x')|_{(S_r(x)\cup S_r(x'))^c}}_{\lp}\\
\label{eq:estim1} & \leq 2\gamma_{2k}\kappap\eta.
\end{align}
Now we estimate the symmetric difference of the supports of the large entries of $x$ and $x'$ in absolute value as follows: if $i\in S_r(x) \Delta S_r(x')$, then either $|x_i| > r$ and $|x'_i|\leq\eta$ or $|x_i| \leq\eta$ and $|x'_i|>r$. This implies that $|x'_i-x_i|>(r-\eta)$. Thus we have
\[\vectornorm{(x-x')|_{S_r(x)\Delta S_r(x')}}_{\lp}^p \geq \left(\#(S_r(x)\Delta S_r(x'))\right)(r-\eta)^p.\]
Together with the inequality~\eqref{eq:estim1}  and the non-negativity of $\vectornorm{(x-x')|_{S_r(x)\cap S_r(x')}}_{\lp}$, we obtain the chain of inequalities 
\begin{align*}
(2\gamma_{2k}\kappap\eta)^p&\geq \vectornorm{(x-x')|_{S_r(x)\cup S_r(x')}}_{\lp}^p \\
&\geq\vectornorm{(x-x')|_{S_r(x)\cap S_r(x')}}_{\lp}^p + \vectornorm{(x-x')|_{S_r(x)\Delta S_r(x')}}_{\lp}^p \\
&\geq\left(\#(S_r(x)\Delta S_r(x'))\right)(r-\eta)^p,
\end{align*}
and therefore
\begin{equation}
\label{eq:diffSr}
\#(S_r(x)\Delta S_r(x'))\leq\frac{(2\gamma_{2k}\kappap\eta)^p}{(r-\eta)^p}.
\end{equation}
For the unique support identification, we want the symmetric difference between the sets $S_r(x)$ and $S_r(x')$ to be empty. Thus the left-hand side of inequality~\eqref{eq:diffSr} has to be zero. Since $\#(S_r(x)\Delta S_r(x'))\in\N$, it is sufficient to require that the right-hand side be strictly less than one, and this is equivalent to condition \eqref{eq:thrs2}. 
\end{proof}

\begin{remark}
\label{rem:precBG}
One additional implication of this latter theorem is that we can give a bound on the difference of $x$ and $x'$ restricted to the relevant entries. Indeed, in case of unique identification of the relevant entries, i.e., $\Lambda \ldef S_r(x) = S_r(x')$ we obtain, by the inequality ~\eqref{eq:estim1}, that
\begin{equation}
\label{eq:estimBigGuys}
\vectornorm{(x-x')_{\Lambda}}_{\lone}\leq 2\gamma_k \kappa_p\eta.
\end{equation}
Notice that we replaced $\gamma_{2k}$ by $\gamma_k \leq \gamma_{2k}$, because now $\# \Lambda \leq k$.
\end{remark}
\begin{remark}
Unfortunately, we are not able to provide the {\it necessity of the gap conditions} \eqref{gapcond1}, \eqref{gapcond2}, \eqref{eq:thrs2} for successful support recovery, simply because we lack optimal deterministic error bounds in general: one way of producing a lower bound would be to construct a counterexample for each algorithm for which
a certain gap condition is violated and recovery of support fails. Since most of the algorithms we shall illustrate below are iterative, it
is likely extremely difficult to provide such explicit counterexamples. Therefore, we limit ourselves here to discuss the difference of $r_1$ and $r_S$ and of $r_{1\text{rew}}$ and $r_S$. We shall see in the numerical experiments that
the {\it sufficient gap conditions} \eqref{gapcond1}, \eqref{gapcond2}, \eqref{eq:thrs2} nevertheless provide actual indications of performance of the algorithms.
\begin{itemize}
\item The gap between the two thresholds $r_1,r_S$ is given by
$$
r_1 - r_S = \left ( 2 \left ( \frac{1+ \gamma_k}{1-\gamma_k} - \gamma_{2 k} \right) \kappa_p(N,k) - 1 \right) \eta.
$$
As $\gamma_{2k } < 1 < \frac{1+ \gamma_k}{1-\gamma_k}$ and $\kappa_p(N,k)$ is very large for $N \gg k$, this \textit{positive} gap is actually very large, for $N \gg 1$.
\item The gap between the two thresholds $r_{1\text{rew}},r_S$ is given by
$$
r_{1\text{rew}} - r_S = \left(9.6\frac{\sqrt{1+\delta_{2k}}}{1-(\sqrt{2}+1)\delta_{2k}}\left(1+\frac{\kappap}{\sqrt{k}}\right) - (1+2\gamma_{2k}\kappap)\right) \eta.
$$

Following for example the arguments in \cite{davenportRIP}, we know that the matrix $A$ having the $(2k,\delta_{2k})$-RIP implies having the $(2k,\gamma_{2k})$-NSP with $\gamma_{2k}=\frac{\sqrt{2}\delta_{2k}}{1-(\sqrt{2}+1)\delta_{2k}}$, which, substituted into the above equation, yields
$$
r_{1\text{rew}} - r_S = \left(\frac{\left(\frac{9.6\sqrt{1+\delta_{2k}}}{\sqrt{k}}-2\sqrt{2}\delta_{2k}\right)\kappap + \left[9.6\sqrt{1+\delta_{2k}} - (1-(\sqrt{2}+1)\delta_{2k})\right]}{1-(\sqrt{2}+1)\delta_{2k}}\right) \eta.
$$
Since $0 < \delta_{2k} < \sqrt{2} -1$, we have $0 <  1-(\sqrt{2}+1)\delta_{2k} < 1$, and therefore the denominator and the right summand in the numerator are positive. The left summand of the numerator is positive and very large as soon as $k < \left(\frac{9.6\sqrt{1+\delta_{2k}}}{2\sqrt{2}\delta_{2k}}\right)^2$. Thus, even in the limiting scenario where $\delta_{2k} \approx \sqrt{2} - 1$, we still have $ k \leq 94$, which may be considered sufficient for a wide range of applications. If one wants to exceed this threshold a more sophisticated estimate of the above term will reveal even less restrictive bounds on $k$. Thus, in general, since $k$ and $\delta_{2k}$ are small, also the left summand is positive. We conclude that again the gap is very large.
\end{itemize}
Unfortunately, this discrepancy cannot be amended because the $\ell_1$-minimization decoder $\Delta_1$ and the iteratively re-weighted $\ell_1$-minimization decoder $\Delta_{1\text{rew}}$ have \textit{not} in general the property 
$$
x \in \Ss_{\eta,k,r}^p \Rightarrow \{ \Delta_1(Ax), \Delta_{1\text{rew}}(Ax) \} \ni x^* \in \Ss_{\eta,k,r}^p.
$$
Hence, it allows us neither to say that also the (iteratively re-weighted) $\ell_1$-minimizer has a bounded noise component
$$
\sum_{i\in(S_r(x^*))^c}|x_i^*|^p\leq\eta^p,
$$ 
nor to apply Theorem \ref{thm:suppid} to obtain support stability. We present several 
examples in Section \ref{sec:NR}, showing these ineliminable limitations of $\Delta_1$ and $\Delta_{1\text{rew}}$.
\end{remark}



\subsection{The regularized selective $p$-potential functional and its properties}

To overcome the shortcomings of methods based exclusively on $\ell_1$-minimizations in 1.~damping the noise-folding and consequently in 2.~having 
a stable support recovery, in this section, we design a new decoding procedure which allows us to have both these very desirable properties.  

Let us first introduce the following functional. 

\begin{definition}[Regularized selective $p$-potential]
We define the \emph{regularized truncated $p$-power function} $W_r^{p,\epsilon}\colon \R \rightarrow \R_0^+$ by
\begin{equation}\label{eq:truncqp}
W_r^{p,\epsilon}(t)=\left\{\begin{array}{ll}
                                          t^p & 0\leq t<r-\epsilon,\\
                                          \pi_p(t) & r-\epsilon\leq t \leq r+\epsilon,\\
                                          r^p & t>r+\epsilon,
                                       \end{array}\right. \quad t \geq 0,
\end{equation}
where $0<\varepsilon <r$, and $\pi_p(t)$ is the third degree  interpolating polynomial
$$
\pi_p(t) := A(t-s_2)^3 +B(t-s_2)^2 + C,
$$
with
\begin{equation}\label{Bdefin}
\left \{ 
\begin{array}{l}
C=\mu_3, \\
B=\frac{\mu_1}{s_2-s_1} - \frac{3 (\mu_3 - \mu_2)}{(s_2 -s_1)^2},\\
A=\frac{\mu_1}{3(s_2-s_1)^2} + \frac{2 B}{3(s_2 -s_1)}.
\end{array}
\right .
\end{equation}
The parameters which appeared in the definition of the interpolating function are defined as: $s_1 = (r-\varepsilon)$, $s_2=(r+\varepsilon)$, $\mu_1 = p(r-\varepsilon)^{p-1}$,
$\mu_2 = (r-\varepsilon)^p$, and $\mu_3 = r^p$. Moreover, we define it for $t<0$ as $W_r^{p,\epsilon}(t)=W_r^{p,\epsilon}(-t)$.
We call the functional $\SLP_r^{p,\epsilon} \colon \R^N \rightarrow \R_0^+$,
\begin{equation} \label{eq:decodeps}
\SLP_r^{p,\epsilon}(x)=\sum\limits_{j=1}^N W_r^{p,\epsilon}(x_j),\qquad r>0,\qquad 1 \leq p \leq 2,
\end{equation}
the \emph{regularized selective $p$-potential (SP) functional}.
\end{definition}
To easily capture the function $W_r^{p,\epsilon}$, we express it explicitly for $p=2$. In this case, the polynomial has the analytic form
$$
\pi_2(t):=\frac{[t+(r-\epsilon)][\epsilon(r+t)-(r-t)^2]}{4\epsilon},
$$
and the graph of $W_r^{p,\epsilon}$ is shown in Figure~\ref{trunc} for $p=2$, $r = 1$, and $\epsilon = 0.4$. Notice that $W_r^{p,0}$ is the truncated p-power function, which is widely used both in statistics and signal processing \cite{A.SolForn12,fowa10}, and also shown in Figure~\ref{trunc} for $p=2$.
\begin{figure}[htp]
\begin{center} 
\includegraphics[width=4.2in]{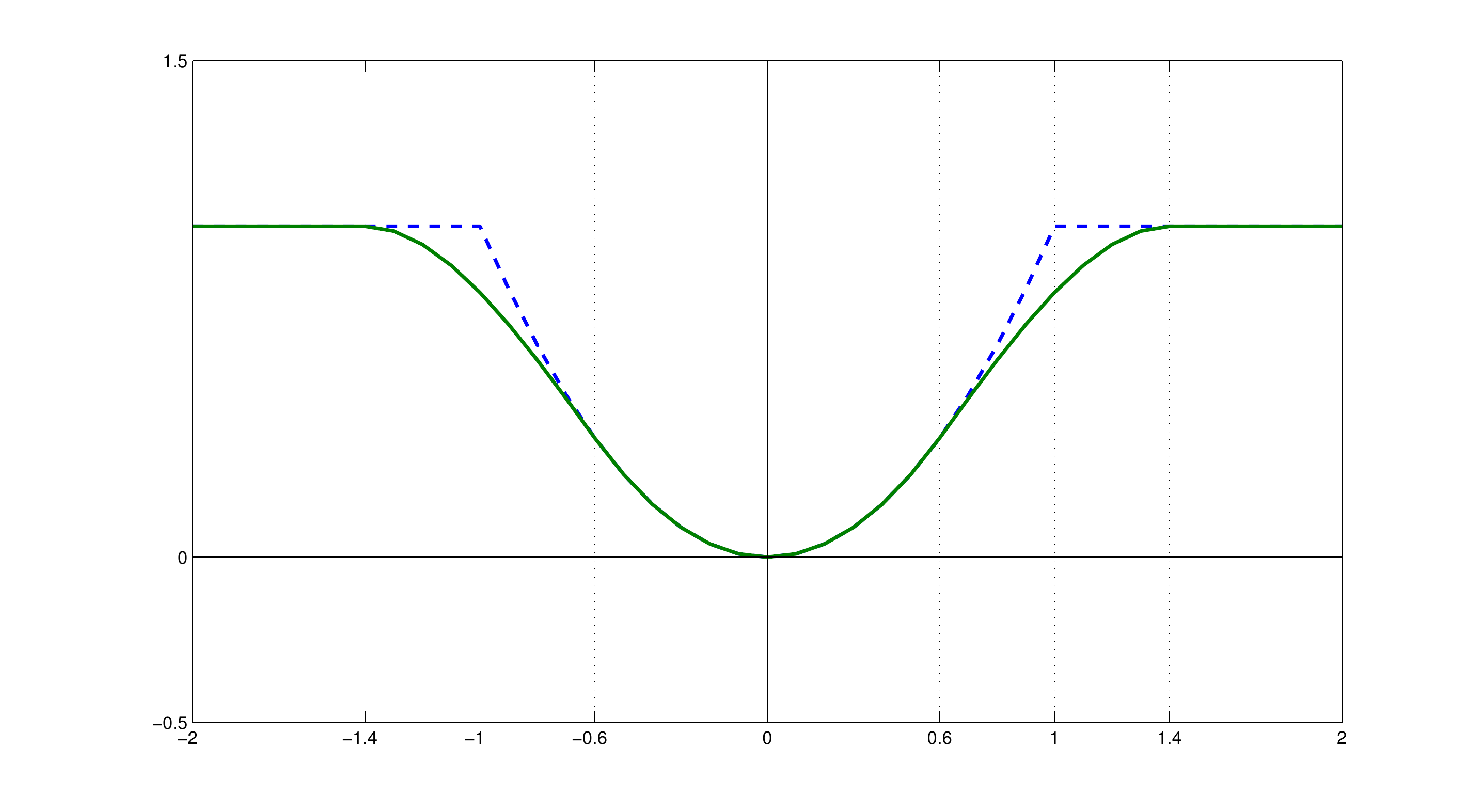}
\end{center}
\caption{Truncated quadratic (2-power) potential $W_1^{2,0}$ (dashed) and its regularization $W_1^{2,0.4}$.}\label{trunc}
\end{figure}

\begin{remark}
Let us notice that the functional $ {\SLP}_r^{p,\epsilon}$ is semi-convex for $\epsilon > 0$, that means that there exists a constant $\omega>0$ such that ${\SLP}_r^{p,\epsilon}(\cdot)+\omega\vectornorm{\cdot}^2$ is convex.
\end{remark}

\begin{theorem}\label{thm:mainth}
Let $A \in\R^{m\times N}$ have the $(2k,\gamma_{2k})-NSP$, with $\gamma_{2k} <1$, and $1 \leq p \leq 2$. Furthermore, we assume $x\in\Ss_{\eta,k,r+\epsilon}^p$, for $\epsilon>0$, $0<\eta < r+\epsilon$, with the property of having the minimal $\#S_{r+\epsilon}(x)$ within $\F(y)$, where $y= Ax$  is its associated measurement vector, i.e., 
\begin{equation}\label{eq:minsupp} 
\#S_{r+\epsilon}(x) \leq \#S_{r+\epsilon}(z) \text{ for all } z \in  \F(y) .
\end{equation} 
If $x^*$ is such that 
\begin{equation}
\label{eq:minMS}
\SLP_r^{p,\epsilon}(x^*)\leq \SLP_r^{p,\epsilon}(x),
\end{equation}
and 
\begin{equation}
\label{eq:assumptionxstar}
|x_i^*| < r-\epsilon,
\end{equation}
for all $i \in \left(S_{r+\epsilon}(x^*)\right)^c$,
then also $x^*\in\Ss_{\eta,k,r+\epsilon}^p$, implying noise-folding damping. Moreover, we have the support stability property 
\begin{equation}\label{eq:suppstabprop}\#(S_{r+\epsilon}(x)\Delta S_{r+\epsilon}(x^*))\leq\frac{(2\gamma_{2k}\kappap\eta)^p}{(r+\epsilon-\eta)^p}.\end{equation}
\label{thm:MSsupp}
\end{theorem}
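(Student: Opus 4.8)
The plan is to reduce the support-stability estimate~\eqref{eq:suppstabprop} to Theorem~\ref{thm:suppid}, which is already available, and to concentrate the actual work on the membership $x^*\in\Ss_{\eta,k,r+\epsilon}^p$. I take $x^*\in\F(y)$, since it is the output of the linearly constrained $\SLP_r^{p,\epsilon}$-minimization, and I abbreviate $\ell\ldef\#S_{r+\epsilon}(x)$ and $\ell^*\ldef\#S_{r+\epsilon}(x^*)$.

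First I would evaluate the two functionals. Every index $i\in S_{r+\epsilon}(x^*)$ has $|x^*_i|>r+\epsilon$, hence $W_r^{p,\epsilon}(x^*_i)=r^p$, whereas assumption~\eqref{eq:assumptionxstar} forces $|x^*_i|<r-\epsilon$ on the complement, where $W_r^{p,\epsilon}(x^*_i)=|x^*_i|^p$. This yields the exact splitting
\[
\SLP_r^{p,\epsilon}(x^*)=\ell^*\,r^p+\sum_{i\in(S_{r+\epsilon}(x^*))^c}|x^*_i|^p.
\]
For $x$ I only need an upper bound: invoking the pointwise inequality $W_r^{p,\epsilon}(t)\le|t|^p$ --- an identity for $|t|<r-\epsilon$, the estimate $r^p<|t|^p$ for $|t|>r+\epsilon$, and on the window $[r-\epsilon,r+\epsilon]$ the polynomial bound $\pi_p(t)\le t^p$ dictated by the Hermite data of $\pi_p$ --- together with $x\in\Ss_{\eta,k,r+\epsilon}^p$, I obtain
\[
\SLP_r^{p,\epsilon}(x)\le \ell\,r^p+\sum_{i\in(S_{r+\epsilon}(x))^c}|x_i|^p\le \ell\,r^p+\eta^p.
\]

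Next I would combine these through the hypothesis~\eqref{eq:minMS}, $\SLP_r^{p,\epsilon}(x^*)\le\SLP_r^{p,\epsilon}(x)$, to reach
\[
(\ell^*-\ell)\,r^p+\sum_{i\in(S_{r+\epsilon}(x^*))^c}|x^*_i|^p\le\eta^p.
\]
The minimal-support hypothesis~\eqref{eq:minsupp}, applied to the feasible competitor $x^*$, gives $\ell\le\ell^*$, so the first term is nonnegative and may be discarded to read off the noise-folding bound $\sum_{i\in(S_{r+\epsilon}(x^*))^c}|x^*_i|^p\le\eta^p$. The same inequality also yields $(\ell^*-\ell)\,r^p\le\eta^p$; since $\ell$ and $\ell^*$ are integers, this pins down $\ell^*=\ell\le k$ (recall $\ell\le k$ because $x\in\Ss_{\eta,k,r+\epsilon}^p$) as soon as $\eta^p/r^p<1$, i.e.\ $\eta<r$, whence $\#S_{r+\epsilon}(x^*)\le k$. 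Combined with the noise bound this establishes $x^*\in\Ss_{\eta,k,r+\epsilon}^p$.

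It then only remains to apply Theorem~\ref{thm:suppid} to the pair $x,x^*\in\Ss_{\eta,k,r+\epsilon}^p$, which share the measurement $Ax=Ax^*=y$ and for which the $(2k,\gamma_{2k})$-NSP holds, with the threshold $r+\epsilon$ playing the role of $r$; its conclusion is verbatim~\eqref{eq:suppstabprop}. I expect the cardinality statement $\ell^*\le k$ to be the delicate step: the damping bound falls out of the functional splitting almost immediately, but controlling $\ell^*$ requires simultaneously the support-minimality of $x$ (for $\ell\le\ell^*$) and the integrality/gap argument (for the reverse inequality), and it is precisely here that the admissible size of $\eta$ relative to $r$ intervenes. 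A subsidiary technical point is the verification of $\pi_p(t)\le t^p$ on the regularization window, which underlies the clean upper bound on $\SLP_r^{p,\epsilon}(x)$ whenever complementary entries of $x$ happen to land in $[r-\epsilon,r+\epsilon]$.
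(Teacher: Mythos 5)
Your proposal is correct and takes essentially the same route as the paper's proof: the same splitting of $\SLP_r^{p,\epsilon}$ into $r^p\,\#S_{r+\epsilon}(\cdot)$ plus a complementary sum (using~\eqref{eq:assumptionxstar} to make it exact for $x^*$), the pointwise bound $W_r^{p,\epsilon}(t)\le|t|^p$ for $x$, the minimality hypothesis~\eqref{eq:minsupp} combined with integrality to force $\#S_{r+\epsilon}(x^*)=\#S_{r+\epsilon}(x)\le k$, and a final appeal to Theorem~\ref{thm:suppid}. The only (shared) subtlety is that both arguments really use $\eta<r$ rather than the stated hypothesis $\eta<r+\epsilon$; you flag this condition explicitly, whereas the paper invokes it as holding ``by assumption''.
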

\begin{proof}
Notice that we can equally rewrite the $\SLP_r^{p,\epsilon}$ functional as
\[\SLP_r^{p,\epsilon}(z) = r^p\#S_{r+\epsilon}(z)+\sum\limits_{i\in\left(S_{r+\epsilon}(z)\right)^c}|z_i|_{\epsilon}^p,\]
where $|t|_{\epsilon}^p:=W_r^{p,\epsilon}(t)$ for $|t|\leq r+\epsilon$. Here, by construction, we have $|t|_{\epsilon}^p\leq |t|^p$.
By the assumptions~\eqref{eq:minMS} and $x \in \Ss_{\eta,k,r+\epsilon}^p$, we have
\begin{align}
\nonumber r^p\#S_{r+\epsilon}(x^*) &\leq\SLP_r^{p,\epsilon}(x^*) \leq\SLP_r^{p,\epsilon}(x) = r^p\#S_{r+\epsilon}(x)+\sum\limits_{i\in\left(S_{r+\epsilon}(x)\right)^c}|x_i|_{\epsilon}^p\\
& \leq r^p\#S_{r+\epsilon}(x)+\sum\limits_{i\in\left(S_{r+\epsilon}(x)\right)^c}|x_i|^p\leq r^p\#S_{r+\epsilon}(x)+ \eta^p,
\end{align}
and thus,
\[\#S_{r+\epsilon}(x^*)\leq\left(\frac{\eta}{r}\right)^p+\#S_{r+\epsilon}(x).\]
As  $\frac{\eta}{r}<1$ by assumption, the minimality property \eqref{eq:minsupp} yields immediately 
\begin{equation}
\label{eq:equalSr}
\#S_{r+\epsilon}(x^*)=\#S_{r+\epsilon}(x) \leq k.
\end{equation}
Assumption~\eqref{eq:assumptionxstar} and again~\eqref{eq:minMS} yield
\begin{align*}
r^p \#S_{r+\epsilon}(x^*)+\sum\limits_{i\in\left(S_{r+\epsilon}(x^*)\right)^c}|x_i^*|^p & = r^p \#S_{r+\epsilon}(x^*)+\sum\limits_{i\in\left(S_{r+\epsilon}(x^*)\right)^c}|x_i^*|_{\epsilon}^p \\
&\leq r^p\#S_{r+\epsilon}(x)+\sum\limits_{i\in\left(S_{r+\epsilon}(x)\right)^c}|x_i|_{\epsilon}^p\\
&\leq r^p\#S_{r+\epsilon}(x)+\sum\limits_{i\in\left(S_{r+\epsilon}(x)\right)^c}|x_i|^p.
\end{align*}
By this latter inequality and~\eqref{eq:equalSr} we obtain
\begin{equation*}
\sum\limits_{i\in\left(S_{r+\epsilon}(x^*)\right)^c}|x_i^*|^p \leq \sum\limits_{i\in\left(S_{r+\epsilon}(x)\right)^c}|x_i|^p \leq \eta^p,
\end{equation*}
which implies $x^*\in \Ss_{\eta,k,r+\epsilon}^p$. By an application of Theorem~\ref{thm:suppid}, we obtain~\eqref{eq:suppstabprop}.
\end{proof}

\begin{remark}\label{remarkPP}
Let us comment on the assumptions of the latter result.
\begin{enumerate}[(i)]
\item The assumption that $x$ is actually the vector with minimal essential support $S_r(x)$ among the feasible
vectors in $\F(y)$ corresponds to the request of being the "simplest" explanation to the data, in a certain sense we
acknowledge the Occam's razor principle;
\item The best candidate $x^*$ to fulfill condition~\eqref{eq:minMS} would be actually
\begin{equation}
\label{eq:globMin}
x^*:=\argmin_{z \in \F(y)}\SLP_r^{p,\epsilon}(z)
\end{equation}
because this will make~\eqref{eq:minMS} true, whichever $x$ is. However~\eqref{eq:globMin} is a highly nonconvex problem whose solution is in general NP-hard~\cite{alwa10}.
The way we will circumvent this drawback is to employ an algorithm, which we describe in details in the following section, to compute $x^*$ by âperforming a local minimization of $\SLP_r^{p,\epsilon}$ in $\F(y)$ around a given starting vector $x_0$. Ideally, the  best choice for $x_0$ would be $x$ itself, so that~\eqref{eq:minMS} may be fulfilled. But, obviously, we do not dispose yet of the original vector $x$. Therefore, a heuristic rule, which we will show to be very robust in our numerical simulations, is to choose $x_0=\Delta_1(y) \approx x$, i.e., we use  $x_0$ as the result of the $\ell_1$-minimization as a warm-up for the iterative algorithm described below. The reasonable hope is that actually 
$$
\SLP_r^{p,\epsilon}(x^*) \leq \SLP_r^{p,\epsilon}(\Delta_1(y)) \approx \SLP_r^{r,\epsilon}(x);
$$
\item The assumption that the outcome $x^*$ of the algorithm has additionally the property $|x_i^*| < r-\epsilon$,
for all $i \in \left(S_{r+\epsilon}(x^*)\right)^c$ is justified by observing that in our implementation $x^*$ will be the result 
of a thresholding operation, i.e., $x_i^* = \mathbb S_p^\mu(\xi_i)$, for $i \in \left(S_{r+\epsilon}(x^*)\right)^c$, see \eqref{thrsal}. The particularly steep shape of the thresholding function in the interval $[r-\epsilon,r+\epsilon]$,
especially for $p=2$, see Figure~\ref{s1(c)}, makes it highly unlikely  for $\epsilon$ sufficiently small that $r-\epsilon \leq |x_i^*|$ for $i \in \left(S_{r+\epsilon}(x^*)\right)^c$. Actually, our numerical experiments confirm that typically this algorithm promotes solutions in $\F(y)$ selectively characterized by a relevant gap between large components exceeding $r$ and small components, significantly below $r$.
\end{enumerate}
\end{remark}


\section{Minimization of the regularized selective $p$-potential functional}\label{sec:MSmin}
\label{sec:MSmin}
In the latter section, we introduced the functional $\SLP_r^{p,\epsilon}$, which is nonconvex. Unluckily, this makes its linearly constrained minimization~\eqref{eq:globMin}, which we want to call selective least $p$-powers (SLP), also nontrivial. Here, we recall a novel and very robust algorithm for linearly constrained nonconvex and nonsmooth minimization, introduced and analyzed first in \cite{A.SolForn12}. The algorithm is particularly suited for our purpose, since it only requires a $C^1$-regular functional. This distinguishes it from other well-known methods such as SQP and Newton methods, which require a more restrictive $C^2$-regularity. All notions and results written in this section are collected more in general in \cite{A.SolForn12}. Nevertheless we report them directly adapted to our specific case in order to have a simplified and more immediate application.\\

\subsection{The algorithm}
In this section, we present the algorithm to perform the local minimization of \eqref{eq:decodeps} in $\F(y)$.
Before describing it, it is necessary to introduce the concept of $\nu$-convexity, which plays a key-role in the minimization process. In fact, to achieve the minimization of the functional ${\SLP}_r^{p,\epsilon}$, we use a Bregman-like inner loop, which requires this property to converge with an a priori rate. 
\begin{definition}[$\nu$-convexity]
A function $f:\mathbb R^N \rightarrow \mathbb R$ is $\nu$-convex if there exists a constant $\nu>0$ such that for all $x, x'\in \mathbb R^N$ and $\phi\in\partial f(x),\psi\in\partial f(x')$
\begin{equation}
\langle\phi-\psi,x-x'\rangle\geq\nu\vectornorm{x-x'}_{\ltwo}^2,
\end{equation}
where $\partial f$ is the subdifferential of the function $f$. 
\end{definition}
The starting values $x_0=x_{(0,0)}\in\R^N$ and $q_{(0,0)}\in\R^m$ are taken arbitrarily. For a fixed scaling parameter $\lambda>0$ and an adaptively chosen sequence of integers $(L_\ell)_{\ell\in\N}$, we formulate 

\begin{algorithm}
\caption{SLP}
\label{ncbreg}
\begin{algorithmic}
\While{$\vectornorm{x_{\ell-1}-x_\ell}_{\ltwo}\leq TOL$}
 \State{$x_{(\ell,0)}=x_{\ell-1}:=x_{(\ell-1,L_{\ell-1})}$}
 \State{$ q_{(\ell,0)}=q_{\ell-1}:=q_{(\ell-1,L_{\ell-1})}$}
 \For{$k=1,\dots,L_\ell$}
 \State{$x_{(\ell,k)} = \argmin_{x \in \R^N} \,\big( {\SLP}^{p,\epsilon}_{\omega, x_{\ell-1}}(x)-\langle q_{(\ell,k-1)}, Ax \rangle + \lambda \vectornorm{Ax-y}^2_{\ltwo}\big)$}
 \State{$q_{(\ell,k)} = q_{(\ell,k-1)}+ 2\lambda (y-Ax_{(\ell,k)})\,$}
 \EndFor
\EndWhile
\end{algorithmic}
\end{algorithm} 

The reader can notice that the functional ${\SLP}^{p,\epsilon}_{\omega, x_{\ell-1}}$, which appears in the algorithm, has not been yet introduced. Indeed, a modification to the functional ${\SLP}^{p,\varepsilon}_r$ must be introduced in order to have $\nu-$convexity, which is necessary for the convergence of the algorithm. It is defined as
$$
{\SLP}^{p,\epsilon}_{\omega, x'}(x)\ldef {\SLP}^{p,\epsilon}_r(x)+\omega\vectornorm{x-x'}^2_{\ltwo},
$$
where $\omega$ is chosen such that the new functional is $\nu-$convex. The finite adaptively chosen number of inner loop iterates $L_\ell$ is defined by the condition
$$
(1+\vectornorm{q_{\ell-1}}_{\ltwo})\vectornorm{Ax_{(\ell,L_\ell)}-y}_{\ltwo}\leq\frac{1}{\ell^{\alpha}},
$$
for a given parameter $\alpha>1$, which in our numerical experiments will be set to $\alpha = 1.1$. We refer to \cite[Section 2.2]{A.SolForn12} for details on the finiteness of $L_\ell$ and for the proof of convergence of Algorithm \ref{ncbreg} to critical points
of $\SLP^{p,\epsilon}_r$ in $\F(y)$. According to Remark \ref{remarkPP} (ii), and as we will empirically verify in our numerical experiments reported in Section~\ref{sec:NR}, this algorithm finds critical points (hopefully close to a global minimizer) with the desired properties illustrated in Theorem \ref{thm:MSsupp},
as soon as we select the starting point $x_0$ by an appropriate warm-up procedure.\\

Algorithm \ref{ncbreg} does not yet specify how to minimize the convex functional
$$
\big( {\SLP}^{p,\epsilon}_{\omega, x_{\ell-1}}(x)-\langle q_{(\ell,k-1)}, Ax \rangle + \lambda \vectornorm{Ax-y}^2_{\ltwo}\big),
$$ in the inner loop. For that we can use an iterative thresholding algorithm introduced in \cite[Section 3.7]{A.SolForn12}, inspired by the previous work \cite{fowa10} for the corresponding \textit{unconstrained} optimization of regularized selective $p$-potentials. This method ensures the convergence to a minimizer and is extremely agile to be implemented, as it is based on matrix-vector multiplications and
very simple componentwise nonlinear thresholdings.\\

By the iterative thresholding algorithm, we actually equivalently minimize the functional
$$
{\SLP}^{p,\epsilon}_{\omega,x'}(x,q)={\SLP}^{p,\epsilon}_{\omega,x'}(x)+\lambda\vectornorm{Ax-(y+q)}_{\ltwo}^2,
$$
where we set $\lambda= \frac{1}{2}$ only for simplicity. The thresholding functions $\mathbb S_p^\mu$ we use are defined in \cite[Lemma 3.15]{A.SolForn12} and, for the relevant case $p=2$, it has the analytic
form
\begin{equation}
\mathbb S_2^\mu(\xi):=\left\{\begin{array}{ll}
                                   \displaystyle\frac{\xi}{1+\mu}&|\xi|<(\br-\epsilon)(1+\mu),\\ \ \\
                                   \displaystyle\frac{4\epsilon}{3\mu}\left(1+\frac{\mu}{4\epsilon}(2\epsilon+\br)-\sqrt{\frac{\Gamma(\xi)}{4}}\right) & (\br-\epsilon)(1+\mu)\leq|\xi|\leq r+\epsilon,\\ \ \\
                                   \xi & |\xi|>\br+\epsilon,\\
                      \end{array}\right.
\end{equation}
where
$$
\Gamma(\xi)\ldef4\left(1+\left(\frac{\mu}{4\epsilon}\right)^2(2\br+\epsilon)^2+\frac{\mu}{2\epsilon}(2\epsilon+\br)-\frac{3\mu}{2\epsilon}\xi\right).
$$

\begin{figure}[!ht]
\begin{center}
\subfigure[]{ \label{s1(a)}
\includegraphics[width=2.3in]{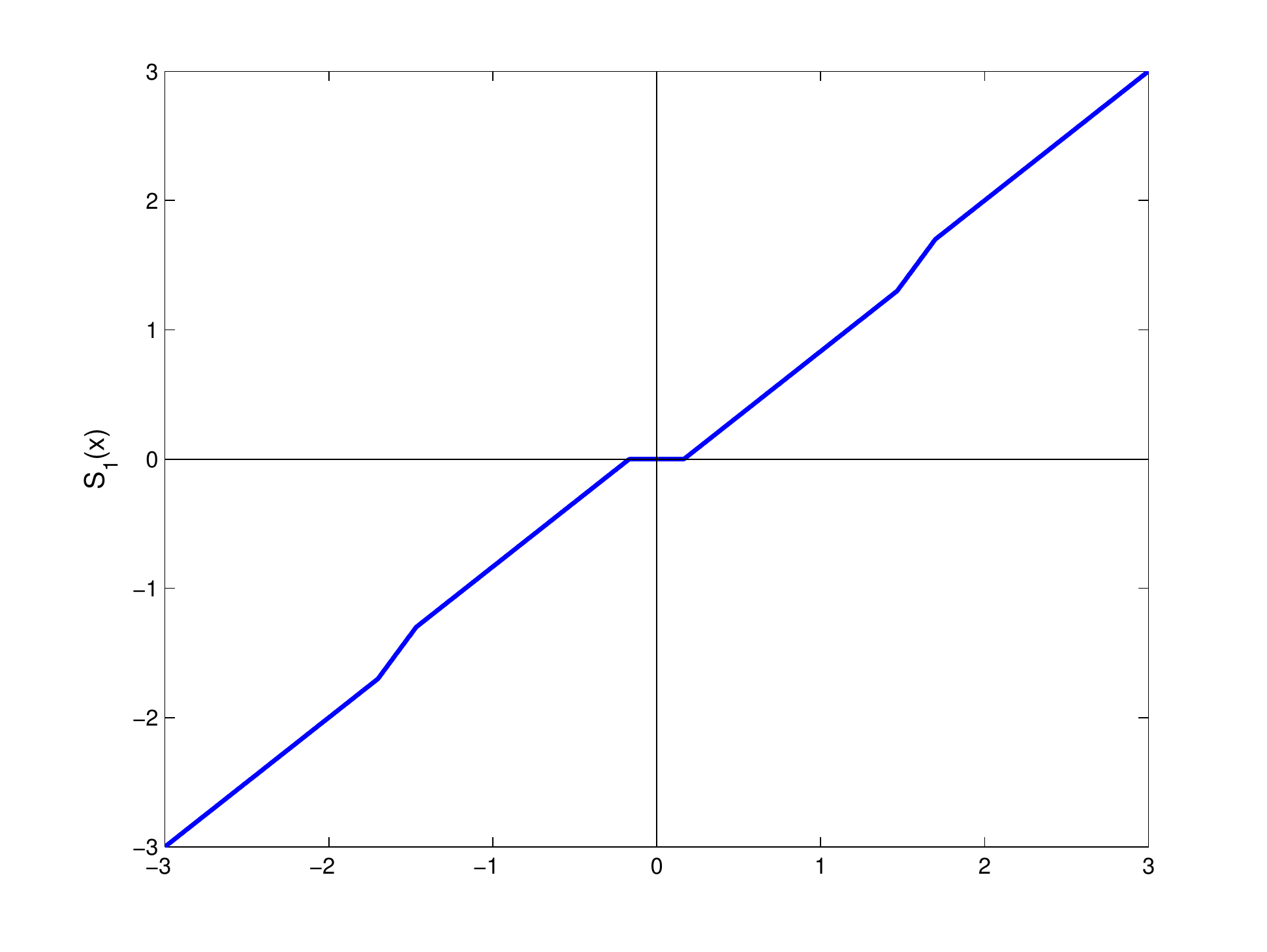}}
\subfigure[]{\label{s1(b)}
\includegraphics[width=2.3in]{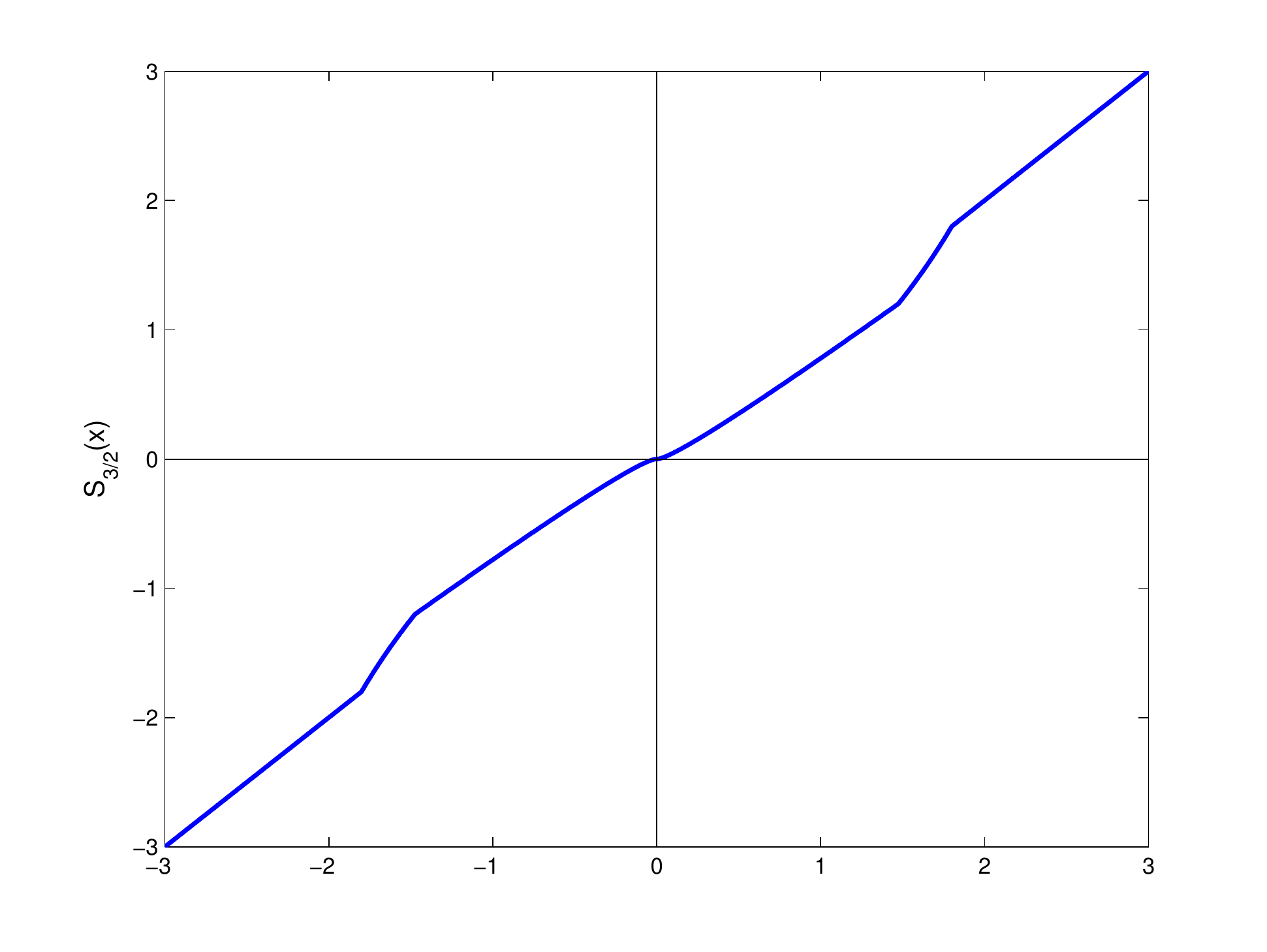}}
\subfigure[]{\label{s1(c)}
\includegraphics[width=2.3in]{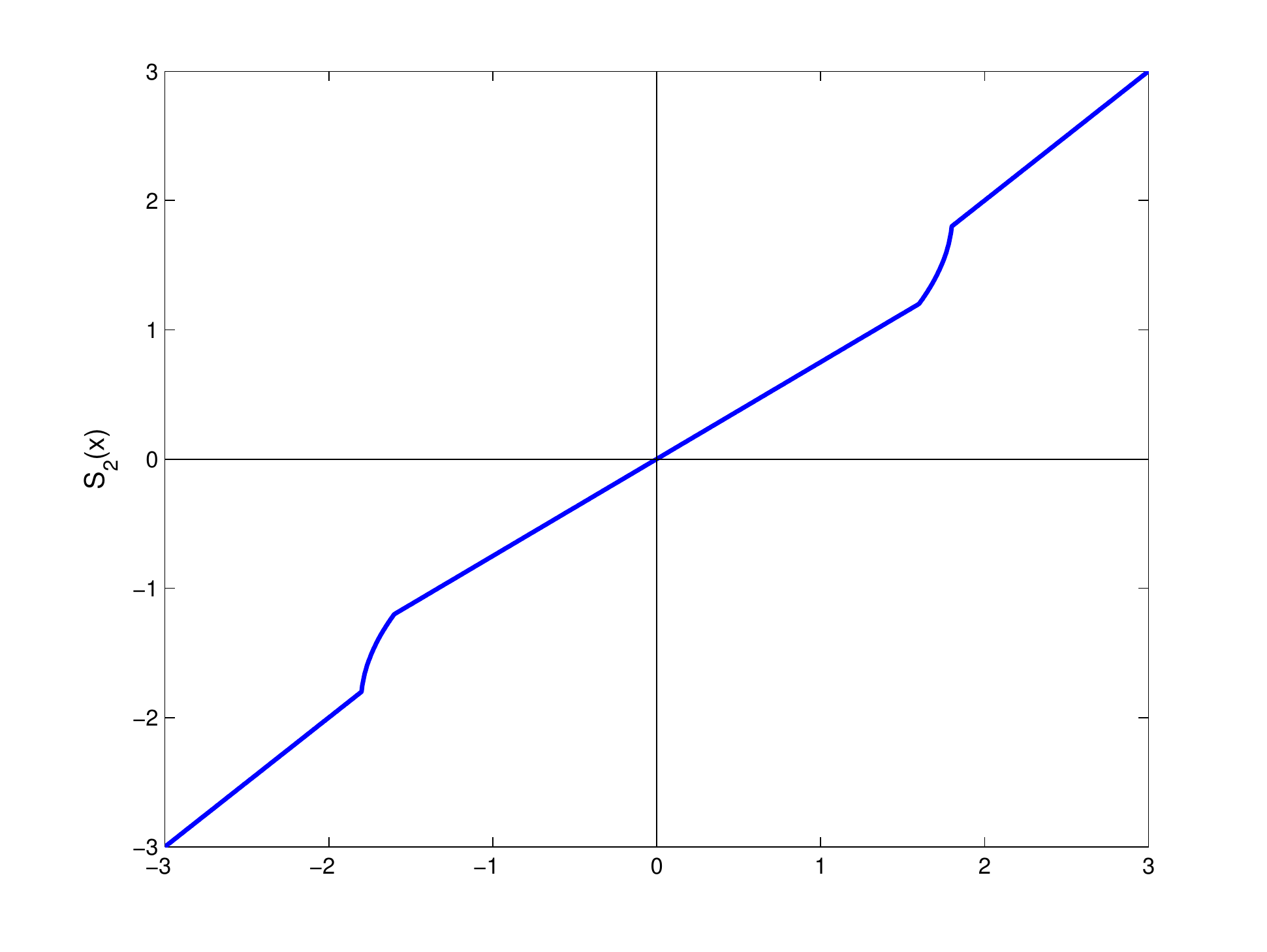}}
\end{center}
\caption{The Lipschitz continuous thresholding functions  $\mathbb S_1^\mu$,  $\mathbb S_{3/2}^\mu$, and $\mathbb S_{2}^\mu$, with parameters $p=1,3/2$, and $2$, respectively, and $r = 1.5$, $\mu = 5$, $\varepsilon=0.3$.}\label{Sthreshold}
\end{figure}

Note that in case $p\neq2$ the only part that varies is the one for $|\xi|<(\br-\epsilon)(1+\mu)$ because the remaining ones do not depend on $p$. We show in Figure \ref{Sthreshold} the typical shapes of these thresholding functions
for different choices of $p \in \{1,3/2,2\}$.
By means of these thresholding functions, the minimizing algorithm in the inner loop is given by the componentwise fixed point iteration, for $n\geq 0$,
\begin{eqnarray}
x^{n+1}_i&=&\mathbb S_p^\mu\left(\left\{\frac{1}{2}\left[(I-\frac{1}{2}A^*A)+(1-\omega)I\right]x^n+\frac{1}{2}A^*(y+q)+\omega x'\right\}_i\right), \nonumber \\
&& \phantom{XXXXXXXXXXXXXXXXXXX}i=1,\dots,N. \label{thrsal}
\end{eqnarray}
We refer to \cite[Theorem 3.17]{A.SolForn12} for the convergence properties of this algorithm.\\

To summarize, Algorithm \ref{ncbreg} can be realized in practice by nesting three loops. One external loop makes slowly vanishing the quadratic convexification, the second external loop updates the Lagrange multipliers $q_{(\ell,k)}$ for a fixed
quadratic convexification, and the final inner loop implements \eqref{thrsal}.



\section{Selectivity principle}\label{sec:selectivity}
In Section~\ref{sec:SI}, we showed a certain superiority of the regularized selective least $p$-powers (SLP) in terms of support identification and damping noise-folding. These theoretical results are supported further below in Section~\ref{sec:NR} by extensive numerical results. However, the realization of the algorithm SLP described above turns out to be computationally demanding as soon as the dimension $N$ gets large.  Since the computational time is a crucial point when it comes to practical applications, we shall introduce in this section another and more efficient approach based on the same principles to obtain equally good support identification results. The investigations in Section~\ref{sec:SI} reveal some weakness of the sparsity concept:
\\
The decoders based on $\lone$-minimization and iteratively re-weighted $\lone$-minimization prefer sparse solutions and have the undesirable effect of sparsifying also the noise. Thus all the noise may concentrate on fewer entries which, by a balancing principle, may eventually  exceed in absolute value the smallest entries of the actual original signal. This makes it impossible to separate the relevant entries of the signal from the noise by only knowing the threshold $r$ which bounds the relevant entries from below. On the contrary, SLP follows a   \emph{selectivity} principle, where the recovery process focuses on the extraction of the relevant entries, while uniformly distributing the noise elsewhere. We want now to export this latter mechanism to formulate the method described below.\\

In the following, we will show that the very well known iterative hard thresholding~\cite{blumensath2008iterative} shows similar support identification properties as SLP while being very efficient in terms of computational time. This method iteratively computes
\begin{equation}
\label{eq:IHTproc}
x^{n+1}\ldef \mathbb H_{\sqrt{\tau}}\left(x^n+A^T(y-Ax^n)\right),
\end{equation}
where
$$
(\mathbb H_{\sqrt{\tau}}(z))_i \ldef \begin{cases} z & \text{if } |z|> \sqrt{\tau}, \\ 0 & \text{else}, \end{cases},
$$
for $\tau \geq 0$. It is converging to a fixed point $x^{\text{IHT}}$ fulfilling
\begin{equation}
\label{eq:IHTFP}
x^{\text{IHT}}\ldef \mathbb H_{\sqrt{\tau}}\left(x^{\text{IHT}}+A^T(y-Ax^{\text{IHT}})\right),
\end{equation}
which is a local minimizer of the functional
$$
\mathcal J_{0}(x)\ldef\vectornorm{Ax-y}_{\ltwo}^2 + \tau \# \supp(x), 
$$
see~\cite[Theorem 3]{blumensath2008iterative} for details.
One immediately observes that iterative hard thresholding has a very selective nature: The computed fixed point $x^{\text{IHT}}$ is a vector with an unknown number, although expected to be small, of non-zero entries whose absolute value is above the threshold $\sqrt{\tau}$. In the following, we show under which conditions this method is able to exactly identify the support of the  relevant entries of the original vector $x$.
\begin{theorem}
\label{thm:SIIHT}
 Assume $A \in\R^{m\times N}$ to have the $(2k,\delta_{2k})$-RIP, with $\delta_{2k} <1$, $\vectornorm{A}\leq 1$, and define $\beta(A) > 0$ such that
\begin{equation*}
\sup\limits_{i\in\{1,\ldots,N\}} |A_i^T z| \geq \beta(A) \vectornorm{z}_{\ltwo},\quad\text{for all } z \in \R^m,
\end{equation*}
 where $A_i$ are the columns of the matrix $A$. Let $x \in \Ss_{\eta,k,r}^p$, for a fixed $1 \leq p \leq 2$, and $y = Ax$ the respective measurements. Assume further
\begin{equation}
\label{eq:RIHT}
r > \eta \left(1+\frac{1}{1-\delta_{2k}}\left(1+\frac{1}{\beta(A)}\right)\right),
\end{equation}
and define $\tau$ such that
\begin{equation}
\label{eq:lambdachoice}
\eta < \sqrt{\tau} < \frac{r-\frac{\eta}{1-\delta_{2k}}}{1+\frac{1}{(1-\delta_{2k})\beta(A)}}.
\end{equation}
Let $x^{\text{IHT}}$ be the limit of the iterative hard thresholding algorithm~\eqref{eq:IHTproc}, and thus fulfilling the fixed point equation~\eqref{eq:IHTFP}. Further, we assume
\begin{equation}
\label{eq:minIHT}
\mathcal J_0(x^{\text{IHT}}) \leq \mathcal J_0(x|_{S_r(x)}),
\end{equation}
then we have that $\Lambda\ldef S_r(x) = \supp(x^{\text{IHT}})$, and 
\begin{equation}
\label{eq:poorestim}
|x_i - x^{\text{IHT}}_i| < r-\sqrt{\tau}\text{, for all }i \in \Lambda.
\end{equation}
\end{theorem}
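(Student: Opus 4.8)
The plan is to show two things: first that the relevant supports coincide, $\Lambda = S_r(x) = \supp(x^{\text{IHT}})$, and second the approximation bound \eqref{eq:poorestim}. The whole argument should mimic the philosophy of Theorem~\ref{thm:MSsupp}: use the minimality assumption \eqref{eq:minIHT} to pin down the cardinality of the recovered support, then use the fixed-point characterization \eqref{eq:IHTFP} together with the RIP and the quantity $\beta(A)$ to locate the entries precisely.

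First I would exploit the fixed-point equation \eqref{eq:IHTFP}. For any index $i$, the value $x^{\text{IHT}}_i$ is either zero or equals the $i$-th component of $x^{\text{IHT}}+A^T(y-Ax^{\text{IHT}})$, and the thresholding guarantees that every surviving (nonzero) entry has absolute value exceeding $\sqrt{\tau}$, while every discarded entry $j$ satisfies $|\{x^{\text{IHT}}+A^T(y-Ax^{\text{IHT}})\}_j| \leq \sqrt{\tau}$. Writing $e \ldef x - x^{\text{IHT}}$ and using $y=Ax$, the residual term is $A^T(y-Ax^{\text{IHT}}) = A^TA e$. The key estimates are then on $\|A^TAe\|$-type quantities restricted to small supports: since $\supp(x^{\text{IHT}})$ has at most $k$ large entries (to be justified) and $S_r(x)$ has at most $k$ entries, the difference $e$ lives essentially on a $2k$-sparse set up to the bounded $\eta$-tail, so the $(2k,\delta_{2k})$-RIP controls $\|Ae\|_{\ltwo}$ from below by $(1-\delta_{2k})$ times the relevant norm of $e$, and the definition of $\beta(A)$ converts a lower bound on $\|Ae\|_{\ltwo}$ into a lower bound on $\sup_i|A_i^TAe|$. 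This is exactly where the two competing inequalities in the admissible window \eqref{eq:lambdachoice} for $\sqrt{\tau}$ come from: the upper bound forces any genuine large entry (with $|x_i|>r$) to survive the threshold, and the lower bound $\sqrt{\tau}>\eta$ forces every noise entry to be killed.

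More concretely, to prove $S_r(x) \subset \supp(x^{\text{IHT}})$ I would take $i\in S_r(x)$, assume for contradiction $x^{\text{IHT}}_i=0$, and derive $|\{x^{\text{IHT}}+A^TAe\}_i|=|x^{\text{IHT}}_i + (A^TAe)_i|$; bounding $|(A^TAe)_i|$ via the $\beta(A)$/RIP machinery against $\|e\|$, which is in turn controlled by $r$, $\eta$ and $\delta_{2k}$, yields a value strictly larger than $\sqrt{\tau}$ by the upper inequality in \eqref{eq:lambdachoice}, contradicting the thresholding that discarded it. The reverse inclusion $\supp(x^{\text{IHT}})\subset S_r(x)$ uses the cardinality argument: assumption \eqref{eq:minIHT} compared against the trivial competitor $x|_{S_r(x)}$ bounds $\tau\#\supp(x^{\text{IHT}})$ by $\|Ax^{\text{IHT}}-y\|_{\ltwo}^2 \leq \mathcal J_0(x|_{S_r(x)})$, and since $\|A(x|_{S_r(x)})-y\|_{\ltwo}^2$ is itself controlled by the $\eta$-tail of $x$, one forces $\#\supp(x^{\text{IHT}})\leq k$; then any spurious index would have to carry magnitude above $\sqrt{\tau}>\eta$ while $x$ itself places only noise there, and the RIP-based gap estimate again produces a contradiction. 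Combining the inclusions gives $\Lambda=S_r(x)=\supp(x^{\text{IHT}})$, and finally the bound \eqref{eq:poorestim} follows by reinserting the now-identified common support into the fixed-point relation and estimating $|x_i-x^{\text{IHT}}_i|=|(A^TAe)_i|$ on $\Lambda$ against $r-\sqrt{\tau}$.

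The main obstacle I anticipate is the bookkeeping of the sparsity level of $e=x-x^{\text{IHT}}$: neither $x$ nor $x^{\text{IHT}}$ is exactly $k$-sparse ($x$ has the $\eta$-tail, and a priori $x^{\text{IHT}}$ could have many small survivors), so one cannot naively invoke the $(2k,\delta_{2k})$-RIP on $e$. The delicate part is therefore to first establish $\#\supp(x^{\text{IHT}})\leq k$ from \eqref{eq:minIHT} \emph{before} using the RIP, and then to carefully split $e$ into its contribution on $S_r(x)\cup\supp(x^{\text{IHT}})$ (a set of size at most $2k$, where RIP applies) and its contribution on the complement (bounded by $\eta$ via $x\in\Ss_{\eta,k,r}^p$ and by $\sqrt{\tau}$ via thresholding). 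Getting the constants in \eqref{eq:RIHT} and the window \eqref{eq:lambdachoice} to line up so that both the ``survival'' and ``elimination'' thresholding tests are strict is where the precise form of the hypotheses on $r$, $\eta$, $\delta_{2k}$ and $\beta(A)$ must be used, and I expect this constant-chasing to be the technically heaviest step.
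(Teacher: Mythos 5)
Your proposal is correct and follows essentially the same route as the paper's proof: the cardinality bound $\#\supp(x^{\text{IHT}})\leq k$ obtained by comparing $\mathcal J_0(x^{\text{IHT}})$ with $\mathcal J_0(x|_{S_r(x)})$ (using $\sqrt{\tau}>\eta$, $\vectornorm{A}\leq 1$, and integrality), then the fixed-point property combined with $\beta(A)$ --- which is precisely the content of the Blumensath--Davies lemma the paper cites, yielding $\vectornorm{Ax^{\text{IHT}}-y}_{\ltwo}\leq \sqrt{\tau}/\beta(A)$ --- together with the RIP lower bound on the $2k$-sparse difference $x|_{S_r(x)}-x^{\text{IHT}}$ and the $\eta$-tail estimate, closing with the same contradiction argument to get $S_r(x)\subset\supp(x^{\text{IHT}})$. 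The only slip is in your final step: writing $e=x-x^{\text{IHT}}$, for $i\in\Lambda=\supp(x^{\text{IHT}})$ the fixed-point equation forces $(A^{T}(y-Ax^{\text{IHT}}))_i=(A^{T}Ae)_i=0$, so there $|x_i-x^{\text{IHT}}_i|=|((I-A^{T}A)e)_i|$ rather than $|(A^{T}Ae)_i|$; the paper sidesteps this bookkeeping by bounding $|x_i-x^{\text{IHT}}_i|\leq\vectornorm{x|_{S_r(x)}-x^{\text{IHT}}}_{\ltwo}<r-\sqrt{\tau}$ uniformly in $i$, which also settles the constant-chasing you flagged as the heaviest step.
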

\begin{proof}
Assume $\#\supp({x^{\text{IHT}}}) > \# S_r(x) = k $. By~\eqref{eq:minIHT}, we have that 
\begin{align*}
0 &< \#\supp(x^{\text{IHT}}) - \#\supp(x|_{S_r(x)})  \\
&=  \#\supp(x^{\text{IHT}}) - \# S_r(x) \\
&\leq \frac{1}{\tau}\left( \vectornorm{A\left(x|_{S_r(x)}\right)-y}_{\ltwo}^2 - \vectornorm{Ax^{\text{IHT}}-y}_{\ltwo}^2\right)  \leq \frac{1}{\tau} \vectornorm{A\left(x|_{S_r(x)}\right)-y}_{\ltwo}^2 \\
&= \frac{1}{\tau} \vectornorm{A\left(x|_{(S_r(x))^c}\right)}_{\ltwo}^2 \leq \frac{1}{\tau} \vectornorm{A}^2\vectornorm{x|_{(S_r(x))^c}}_{\ltwo}^2 \leq \frac{\eta^2}{\tau} \\
& < 1, 
\end{align*} 
where the last inequality follows by~\eqref{eq:lambdachoice}. Since $(\#\supp\left(x|_{S_r(x)}\right) -\#\supp{x^{\text{IHT}}})\in\N$, the upper inequality yields to a contradiction. \\

Thus  $\#\supp(x^{\text{IHT}}) \leq \# S_r(x) = k $ and therefore $x^{\text{IHT}}$ and $x|_{S_r(x)}$ are both $k$-sparse, and $\left(x^{\text{IHT}} - x|_{S_r(x)}\right)$ is $2k$-sparse. Under our assumptions we can apply \cite[Lemma 4]{blumensath2008iterative} to obtain
\begin{equation}
\label{eq:boundIHT}
\vectornorm{A x^{\text{IHT}}-y}_{\ltwo} \leq \frac{\sqrt{\tau}}{\beta(A)}.
\end{equation}
In addition to this latter estimate, we use the RIP, the sparsity of $x^{\text{IHT}} - x|_{S_r(x)}$, and~\eqref{eq:lambdachoice} to obtain for all $i\in\{1,\ldots,N\}$ that
\begin{align*}
| \left( x|_{S_r(x)}\right)_i - x_i^{\text{IHT}}| &\leq \vectornorm{ x|_{S_r(x)} - x^{\text{IHT}}}_{\ltwo} \leq \frac{ \vectornorm{A(x|_{S_r(x)} - x^{\text{IHT}})}_{\ltwo} }{1-\delta_{2k}} \\ 
&\leq  \frac{ \vectornorm{A(x - x^{\text{IHT}})}_{\ltwo} + \vectornorm{A\left(x|_{(S_r(x))^c}\right)}_{\ltwo} }{1-\delta_{2k}} \\
& \leq \frac{ \vectornorm{y - Ax^{\text{IHT}}}_{\ltwo} + \vectornorm{A\left(x|_{(S_r(x))^c}\right)}_{\ltwo} }{1-\delta_{2k}} \\
&\leq  \frac{ \sqrt{\tau} }{\beta(A)\left(1-\delta_{2k}\right)} +  \frac{ \eta }{1-\delta_{2k}} < r-\sqrt{\tau}.
\end{align*}
Assume now that there is $\tilde{i}\in\N$ such that $\tilde{i}\in S_r(x)$ and $\tilde{i}\notin\supp(x^{\text{IHT}})$. But then we would also have $|x_{\tilde{i}}-x^{\text{IHT}}_{\tilde{i}}| = |x_{\tilde{i}}| > r $, which leads to a contradiction. Thus, we have equivalently  $S_r(x) \subset \supp(x^{\text{IHT}})$. In addition to $\#\supp(x^{\text{IHT}}) \leq \# S_r(x)$, this concludes the proof.
\end{proof}
\begin{remark} Let us discuss some of the assumptions and implications of this latter result. 
\begin{enumerate}[(i)]
\item Since iterative hard thresholding is only computing a local minimizer of $\mathcal J_0$, condition~\eqref{eq:minIHT} may not be always fulfilled for any given initial iteration $x^0$. 
Similarly to the argument in Remark \ref{remarkPP} (ii), using the $\lone$-minimizer as the starting point $x^0$, or equivalently choosing the vector $x^0$ as composed of the entries of $\Delta_1(A x)$ exceeding $\sqrt \tau$ in absolute value, we may allow us to approach a local minimizer which fulfills~\eqref{eq:minIHT}. 
\item Condition~\eqref{eq:RIHT} is comparable to the one derived in~\eqref{eq:thrs2}. If A is ``well-conditioned'', i.e., we have that $(1-\delta_{2k}) \sim 1$, and $\beta(A) \sim 1$, then
$$
1+\frac{1}{1-\delta_{2k}}\left(1+\frac{1}{\beta(A)}\right) \sim  3.
$$
\end{enumerate}
\end{remark}
Although we are able to exactly identify the support $\Lambda$ by means of Theorem~\ref{thm:SIIHT}, we only have the very poor error estimate~\eqref{eq:poorestim} of the relevant part of the signal. The reason why we cannot obtain an  estimate as good as the one in Remark~\ref{rem:precBG} is that the conditions $x^{\text{IHT}} \in \Ss_{\eta,k,r}^p$, and $Ax = Ax^{\text{IHT}}$ to apply Theorem~\ref{thm:suppid} are in general not fulfilled. Hence an additional correction to $x^{\text{IHT}}$ is necessary. As we now dispose of the support $\Lambda=S_r(x)$, a natural approach is to seek for an additional vector $x'$ which is the solution of 
\begin{align}
\nonumber\min\limits_{z\in\R^n} \quad &\vectornorm{Az-y}_{\ltwo}^2 \\
\label{eq:probCorIHT} \text{s.t.} \quad & \vectornorm{z_{\Lambda^c}}_{\ell_p}\leq \eta,  \\
 & \nonumber |z_i| \geq r \text{, for all } i\in \Lambda. 
\end{align}
Since the original signal $x$ fulfills $Ax-y = 0$, and $x \in \Ss_{\eta,k,r}^p$, it is actually a solution of problem~\eqref{eq:probCorIHT}. Thus, we conclude that for any minimizer $x'$ of problem~\eqref{eq:probCorIHT} the objective function equals zero, thus $\vectornorm{Ax'-y}_{\ell_2} = 0$, and we conclude $Ax = Ax'$ and  $x' \in \Ss_{\eta,k,r}^p$ as well. 
The optimization \eqref{eq:probCorIHT} is in general nonconvex, but, luckily, we can easily recast it in an equivalent convex one: Since $|x_i - x^{\text{IHT}}_i| < r-\sqrt{\lambda}$, and $|x_i|>r$, we know that the relevant entries of $x$ and $x^{\text{IHT}}$ have the same sign. Since we are searching for solutions which are close to $x$, the second inequality constraint becomes $\sign(x^{\text{IHT}}_i)z_i \geq r$, for all $i\in\Lambda$. 
Together with the equivalence of $\ell_2$- and $\ell_p$-norm, we rewrite problem \eqref{eq:probCorIHT} as
\begin{align}
\nonumber\min\limits_{z\in\R^n} \quad &\frac{1}{2} z^T(A^TA)z-y^TAz \\
\label{eq:probCorIHTrew} \text{s.t.} \quad & z^TP_0z - (N-k)^{1-\frac{2}{p}}\eta^2\leq 0, \\
\nonumber & z^T P_j z - (\sign(x^{\text{IHT}}_{i_j}) e_{i_j})^T z + r \leq 0 \text{, for all } i_j \in \Lambda \text{ , } j=1,\ldots,\#\Lambda,
\end{align}
where $P_0  \in \R^{N\times N}$ is defined componentwise by $$(P_0)_{r,s} \ldef \begin{cases} 1 &\text{if } r=s\in \Lambda \\ 0 &\text{else}\end{cases},$$ and $P_j = 0$, $j = 1,\ldots\#\Lambda$. Since $A^TA$, $P_0$, and $P_j$, $j=1,\ldots,\#\Lambda$, are semi-definite, problem~\eqref{eq:probCorIHTrew} is a {\it convex quadratically constrained quadratic program} (QCQP) which can be efficiently solved by standard methods, e.g., interior point methods~\cite{nowr06}. Since we combine here three very efficient methods ($\ell_1$-minimization, iterative hard thresholding, and a QCQP), the resulting procedure is much faster than the computation of SLP while, as we will show in the numerics,  keeping similar support identification properties.

\section{Numerical results}\label{sec:NR}
The following numerical simulations provide empirical confirmation of the theoretical observations in Section~\ref{sec:SI} and Section~\ref{sec:selectivity}. 
In particular, we want to show that  SLP and IHT, initialized by the $\ell_1$-minimizer as a starting value, are very robust and provide a significantly enhanced rate of recovery of the support of the unknown sparse vector as well as a better accuracy in
approximating its large entries, whenever limiting noise, i.e., $\eta \approx r$, is present on the signal.


In the previous sections we provided a very detailed description of the parameter choice for the concrete implementation of SLP and IHT, which depends in a rather explicit way on the threshold parameter $r>0$ and it is independent 
of the dimensionalities $N,m,k$ of the problem.
Instead, it turns out to be extremely hard to tune the parameter $\delta$, appearing in \cite{candesIRWL1,needellIRWL1} as the $\ltwo$-norm residual discrepancy, in order to obtain the best performances for the iteratively re-weighted $\ell_1$-minimization (IRW$\ell_1$) in terms
of  support identification and  accuracy in approximating the large entries of the original vector. In the papers \cite{candesIRWL1,needellIRWL1} the authors indicated $\delta^2=\sigma^2(m+2\sqrt{2m})$, depending on $m$, as the best parameter
choice for ameliorating the discrepancy in $\ltwo$-norm between original and decoded vector with respect to the sole $\ell_1$-minimization. However, in our  experiments we found out that for the two purposes mentioned above a much smaller $\delta$ has to be chosen. The stability parameter $a$, which avoids the denominator to be zero in the weight updating rule seems to have instead no strong influence, and it is set to 0.1 in our experiments. We executed 8 iterations of IRW$\ell_1$ as a reasonable compromise
between computational effort and accuracy.

We also consider as one of the test methods $\lone$-minimization, where we substituted the equality constraint $Ax=y$ with an inequality constraint which takes into account the noise level, folded from the noise on the signal though; thus
$$
\vectornorm{Ax-y}_{\ltwo}\leq\vectornorm{An}_{\ltwo}\leq \delta.
$$
In this constraint, we used the same parameter $\delta$ as for IRW$\ell_1$. 
\\

As we shall argue in detail below, the following numerical tests indicate that $\ell_1$+IHT is much faster and usually more robust than $\ell_1$+SLP, and that both
of them perform much better than $\ell_1$-minimization and IRW$\ell_1$ in terms of support recovery and accuracy in approximating the large entries in absolute
value of the original signal.

\paragraph{Advantages of SLP with respect to $\ell_1$-minimization}
\begin{figure}[!ht]
\begin{center}
\subfigure[]{ \label{fig:1(a)}
\includegraphics[width=1\textwidth]{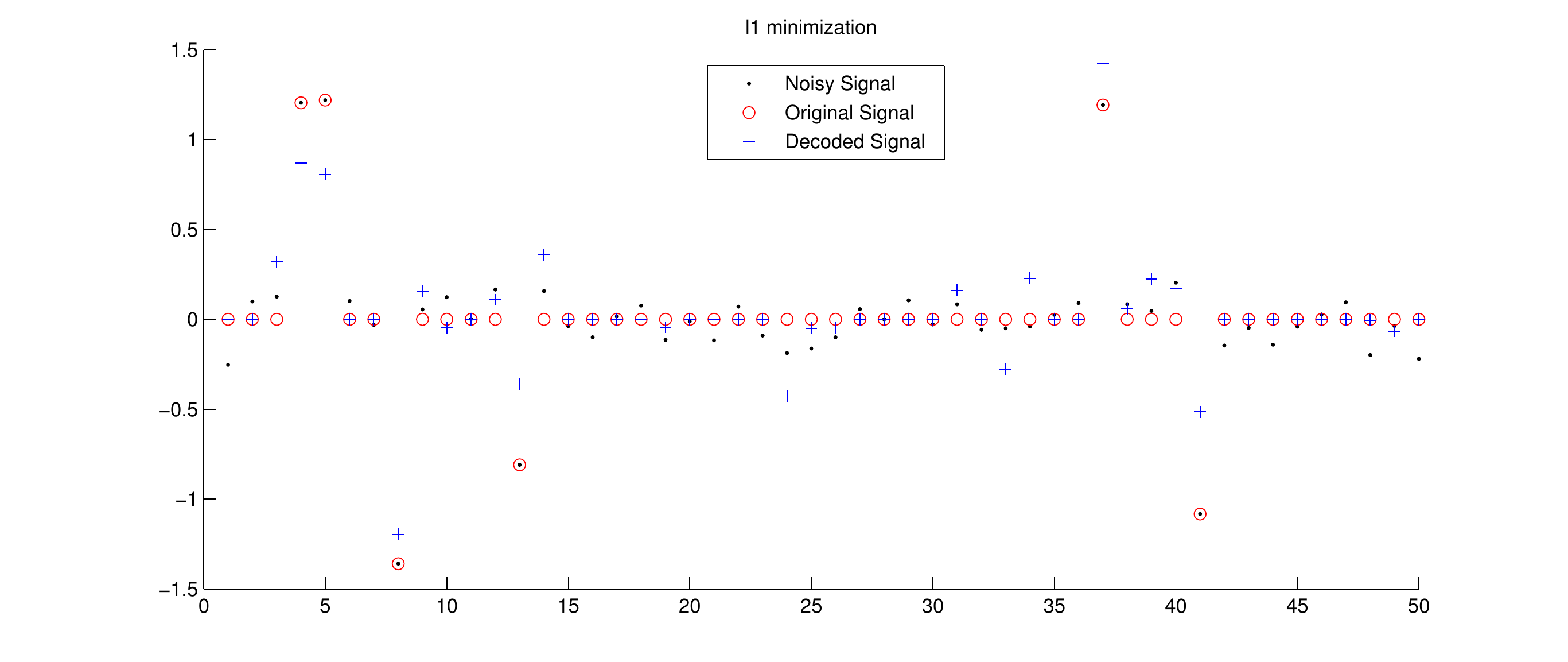}}
\subfigure[]{\label{fig:1(b)}
\includegraphics[width=1\textwidth]{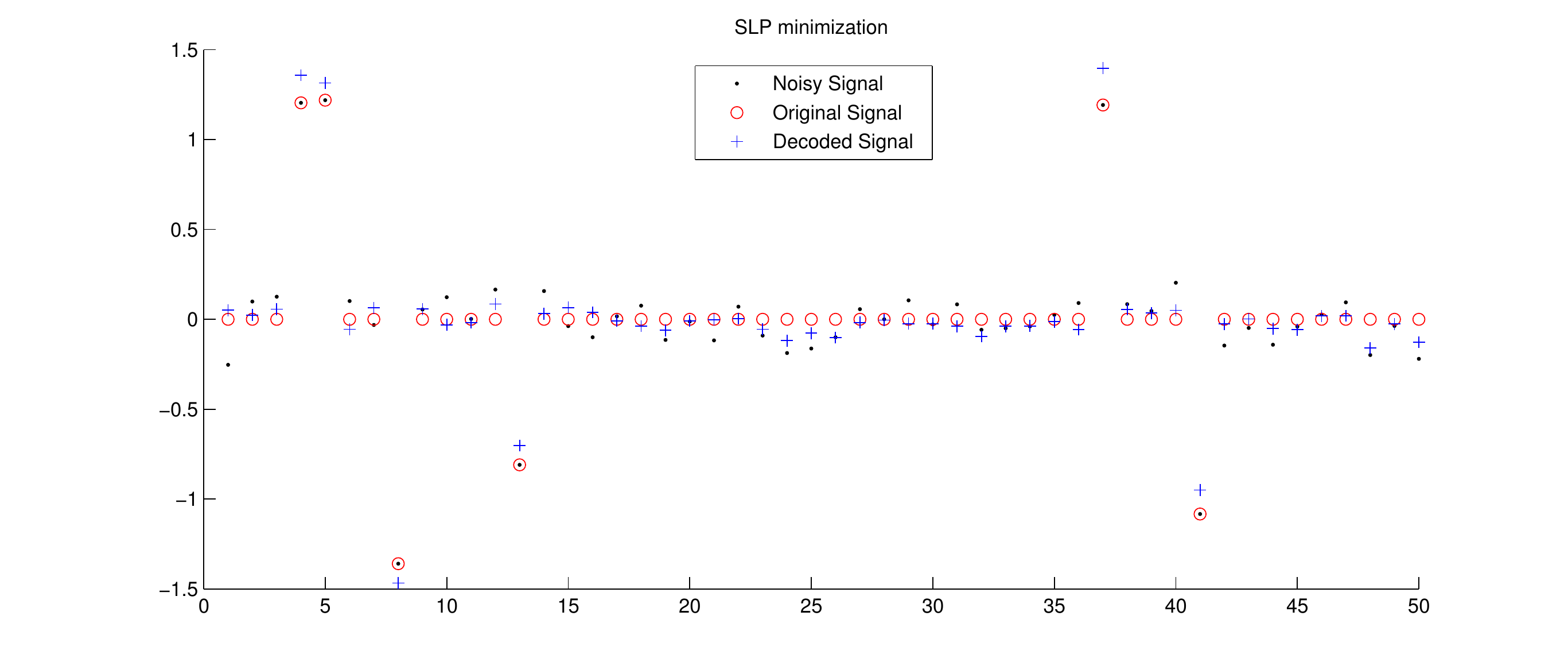}}
\end{center}
\caption{The results of the $\lone$-minimization and SLP are shown in Subfigure~\ref{fig:1(a)} and~\ref{fig:1(b)}  respectively. The two decoders are intended to recover the original signal~({\color{red}o}), starting from the measurement of the noisy signal~($\cdot$). The output of the two processes is represented by~({\color{blue}$+$}). In this case the starting value for both minimizations is $x_0=0$.}
\label{fig:l1ms}
\end{figure}

We shall start the discussion on numerical experiments with a comparison between the $\lone$-minimization and SLP for one typical example reported in Figure~\ref{fig:l1ms}. Although the setting of the two methods is the same, the results are very different: SLP-minimization can recover the signal with a very good approximation of its large entries in absolute value and a significant reduction of the noise level, while $\lone$-minimization may approximate the signal in a bad way, due to the amplification of the noise. For instance, it is evident in Subfigure \ref{fig:1(a)} that $|x_{\lone}(13)|<|x_{\lone}(24)|$ gives a wrong information about the location of the relevant entries, mismatching them with the noise. However, in this particular example, we were lucky to choose the right starting value for SLP. Due to its nonconvex character, in general SLP is computing a local minimizer, which might be far away from the original signal.

\paragraph{Choosing $\ell_1$-minimization as a warm up}

In Section \ref{sec:MSmin}, we mentioned that the algorithm, which minimizes the functional ${\SLP_r^{p,\epsilon}}$, finds only a critical point, so the condition $\SLP_r^{2,\epsilon}(x^*) \leq \SLP_r^{2,\epsilon}(x)$ \eqref{eq:minMS} used in the proof of Theorem~\ref{thm:MSsupp} may not be always valid. In order to enhance the chance of validity of this condition, the choice of an appropriate starting point is crucial. As we know that the $\lone$-minimization converges to its global minimizer with at least some guarantees given by Theorem \ref{thm:l1guar}, we use the result of this minimization process as a warm up to select the starting point of Algorithm \ref{ncbreg}. In the following, we distinguish between SLP which starts at $x_0 = 0$ and $\lone$+SLP which starts at the $\lone$-minimizer.

\begin{figure}[!htp]
\begin{center}
\includegraphics[ width=\textwidth]{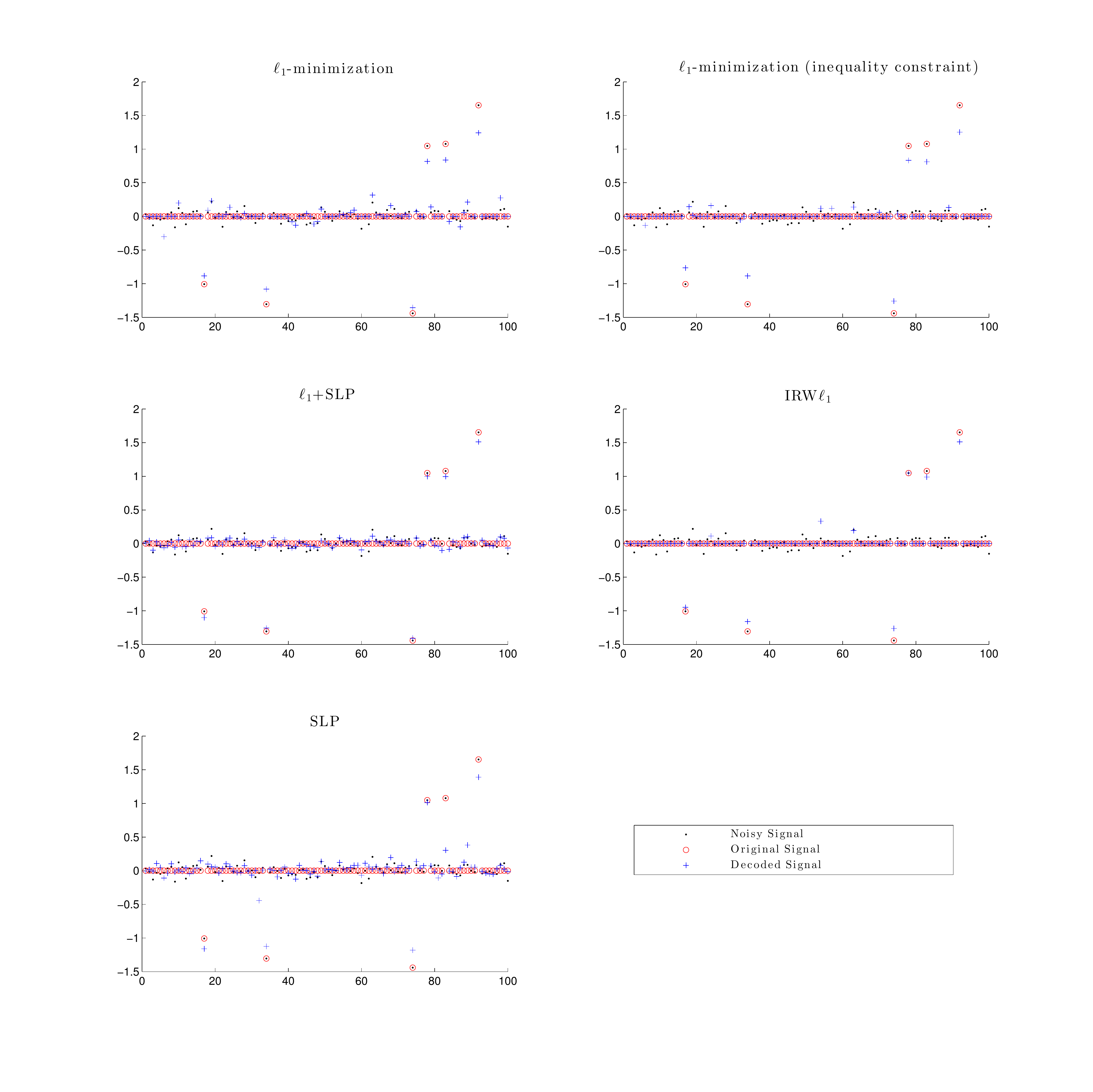}
\caption{The figure reports the results of five different decoding processes ({\color{blue}$+$}) of the same problem where the circles ({\color{red}o}) represent the original signal and the points ($\cdot$) represent the original signal corrupted by the noise.}
\label{good-bad}
\end{center}
\end{figure}

In Figure~\ref{good-bad}, we illustrate the robustness of $\ell_1$+SLP (bottom left subfigure) in comparison to the $\ell_1$-minimization based methods and SLP starting at $0$. Here, SLP converged to a feasible critical point, but it is quite evident that the decoding process failed since the large entry at position 83 (signal) was badly recovered and even the entry at position 89 (noise) is larger. If we look at the $\lone$-minimization result (top left subfigure) or the $\lone$-minimization with inequality constraint (top right subfigure), the minimization process brings us close to the solution, but the results still significantly lack accuracy. By $\lone$+SLP (center left subfigure) we obtain a good approximation of the relevant entries of the original signal and we get a significant correction and an improved recovery. Also IRW$\lone$ improves the result of $\ell_1$-minimization significantly, but still approximates the large entries worse than $\lone$+SLP. Although the difference is minor, we observe another important aspect of IRW$\lone$: the noise part is sparsely recovered, while $\lone$+SLP distributes the noise in a more uniform way in a much smaller stripe around zero. This drawback of IRW$\lone$ can be crucial when it comes to the distinction of the relevant entries from noise. 

\paragraph{Massive computations}
The previously presented specific examples in support of our new decoding strategies are actually typical. 
In order to support this work with even more impressive and convincing evidences, we present some statistical data obtained by solving series of problems. We decided to fix the parameters in order to have the most coherent data to be analyzed; in particular, we set $N=100$, $m=40$, $r=0.8$, $k=1,\ldots,7$, and $\eta = 0.75$. The vector $n$ is composed of random entries with normal distribution and then it is rescaled in order to have $\vectornorm{n}_{\ell_2} = \eta$. Figures~\ref{gerr}, \ref{gbig}, and \ref{gsupp} report the results obtained considering 30 different i.i.d.Gaussian encoding matrices while Figures~\ref{ferr},~\ref{fbig}, and \ref{fsupp} used 30 random subsampled cosine transformation encoding matrices. In the following, we use $x^*$ generically for the decoded vector of any method.\\

\begin{figure}[!ht]
\begin{center}
\includegraphics[width=\textwidth]{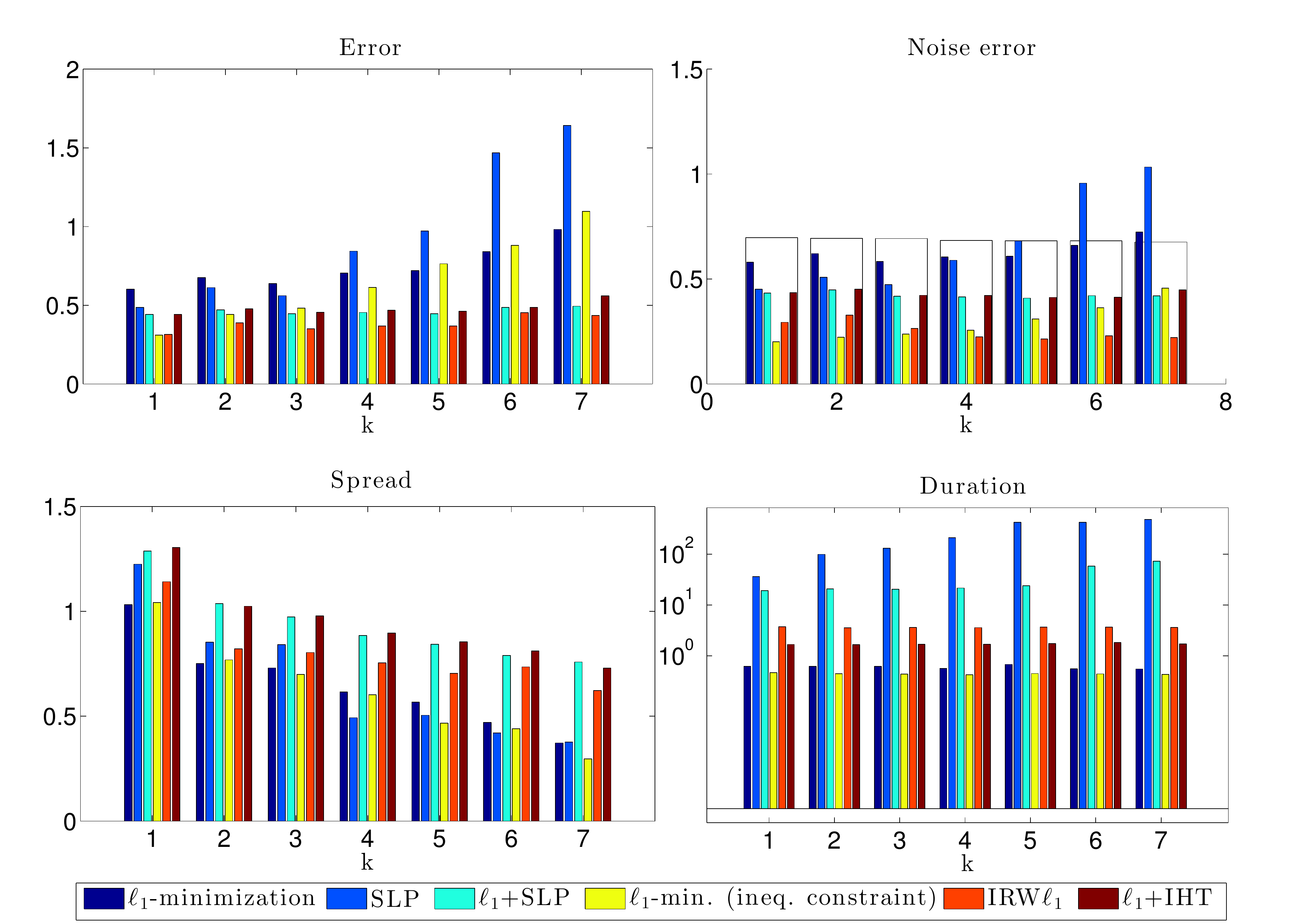}
\caption{The columns refer to the different results of $\lone$-minimization ({\color{MidnightBlue}dark blue}), SLP ({\color{blue}blue}), $\lone$+SLP ({\color{cyan}cyan}), $\lone$-minimization with inequality constraint ({\color{yellow}yellow}), IRW$\lone$ ({\color{orange}orange}), and $\ell_1$+IHT ({\color{BrickRed}brown}). In the \emph{Noise error} subfigure the white column in the background represents the noise level. On the x-axis the different values of $k$ are displayed and each column is the mean of the results given by 30 trials. The results were obtained by Gaussian matrices.}
\label{gerr}
\end{center}
\end{figure}

Figure ~\ref{gerr} reports the first part of the statistics which we have collected for the Gaussian matrices. We start commenting the subfigures clockwise. The first subfigure, on the upper-left, represents the mean value of the error between the exact signal and the decoded one $\vectornorm{x-x^*}_{\ltwo}$. For $\lone$+SLP, $\lone$+IHT, and \irwl the absolute $\ell_2$-norm discrepancy between original and decoded vector seems to be stable and independent of the number $k$ of large entries. These methods outperform $\lone$-minimization; \irwl performs slightly better. We also observe that the choice of the starting point is crucial for SLP and IHT. 

The second subfigure is the mean value of the noise level 
and we can see exactly what we inferred looking at Figure~\ref{good-bad}: ${\lone}$-minimization returns a larger noise level with respect to all the other methods, except SLP; and  IRW$\lone$ has the best noise reduction property. 

The third is the mean computational time, presented in logarithmic scale. All tests were implemented and run in Matlab R2013b in combination with CVX~\cite{cvx,gb08}, to solve the $\ell_1$-minimization with equality and inequality constraint, its iteratively re-weighted version, and the QCQP. We observe that SLP and $\ell_1$+SLP are extremely slow. However, in comparison, the good starting point for SLP provides an advantage in terms of computational time. IHT has a computational complexity in between $\ell_1$-minimization and IRW$\lone$.

The fourth plot reports the mean value of the discrepancy between noise level and the large entries of the signal, thus
$\min\limits_{i\in S_r(x)} |x_i^*| - \max\limits_{i\in S_r(x)^c} |x_i^*|$. This plot shows how good the small entries are distinguished from the large ones in absolute value. We see that $\ell_1$+SLP and $\ell_1$+IHT perform best, which again is a result of their non-sparse noise recovery.  \\

\begin{figure}[!htb]
\begin{center}
\includegraphics[width=\textwidth]{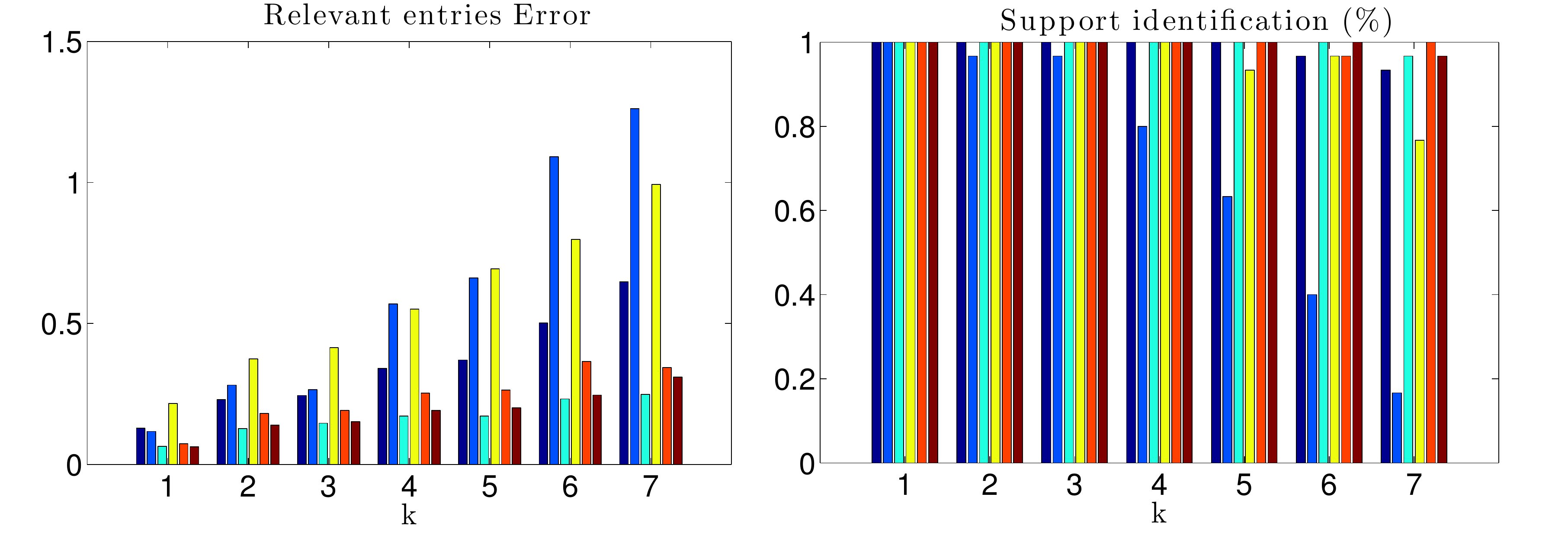}
\end{center}
\caption{The subfigures represent the error on the relevant entries and the support identification property by knowledge of $k$. For more details on the displayed data we refer to the caption of Figure~\ref{gerr}. The results were obtained by Gaussian matrices.}
\label{gbig}
\end{figure}

In Figure~\ref{gbig}, we report the histogram of the mean-value of the errors on the relevant entries: the quantities on the left subfigure are computed as the mean values of $\vectornorm{x_{S_r(x)}-x_{S_r(x)}^*}_{\ltwo}$ where we suppose to know the $k$ largest entries of the original signal. The right subfigure shows how often the $k$ largest entries of $x^*$ coincided with $S_r(x)$. Notice that there might be entries below the threshold $r$ among the $k$ largest entries of $x^*$. We conclude that, knowing the number of large entries, IRW$\lone$, $\lone$-minimization, $\lone$+SLP, and $\lone$+IHT recover the support with nearly 100\% success. In addition, $\lone$+SLP approximates best the magnitudes of the relevant entries. \\

\begin{figure}[!ht]
\begin{center}
\includegraphics[width=\textwidth]{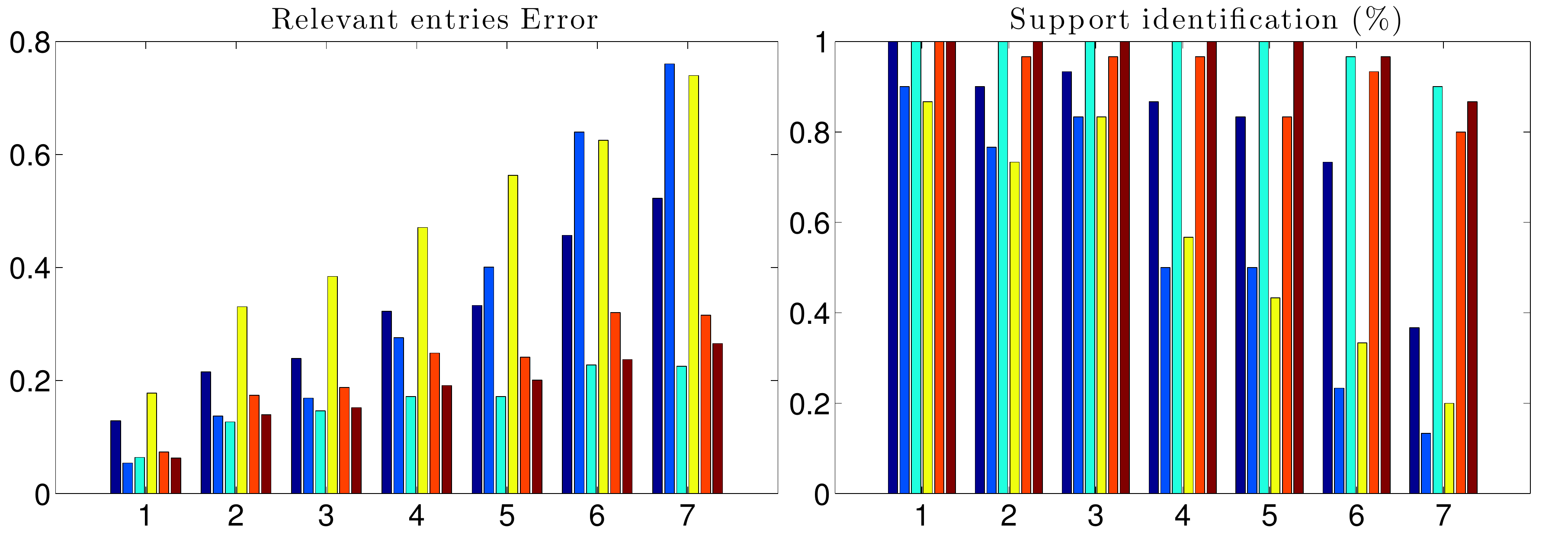}
\end{center}
\caption{The subfigures represent the error on the relevant entries and the support identification property by knowledge of $r$. For more details on the displayed data we refer to the caption of Figure~\ref{gerr}. The results were obtained by Gaussian matrices.}
\label{gsupp}
\end{figure}

In Figure \ref{gsupp} we compute again the mean-value of the relevant entries, but this time without the knowledge of $k$ but the knowledge of $r$ and therefore $S_r(x^*)$: the quantities on the left subfigure are the mean values of  $\vectornorm{x|_{S_r(x^*)}-x^*|_{S_r(x^*)}}_{\ltwo}$. In the right subfigure we attribute a positive match in case $S_r(x^*)=S_r(x)$ so that the relevant entries of $x^*$ coincide with the ones of the original signal. By our theory, we expect $\lone$+SLP and $\lone$+IHT to produce a high rate of success of correctly recovered support. Actually this is confirmed by the experiments:  Both methods do a very accurate recovery, as it gives us almost always $100\%$ of the correct result while the other methods perform worse. 

Figures~\ref{ferr},~\ref{fbig}, and \ref{fsupp} represent the same statistical data reported respectively in the Figures~\ref{gerr}, \ref{gbig}, and \ref{gsupp} but using random subsampled cosine transformation encoding matrices. Without describing the result in detail, we state that they are very similar for these problems as well.
\begin{figure}[!htb]
\begin{center}
\includegraphics[width=\textwidth]{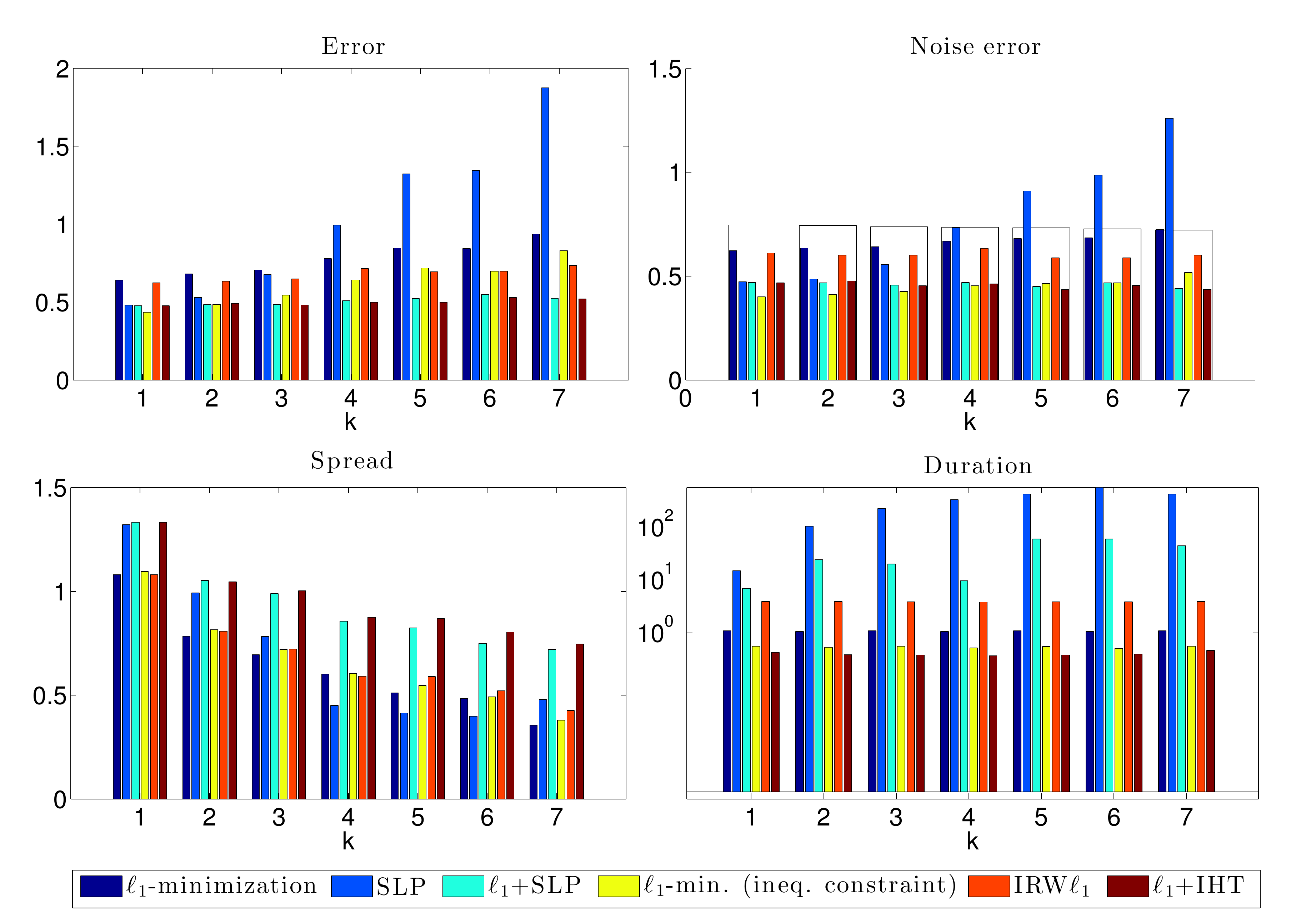}
\caption{The subfigures represent the errors, the computation time, and the separation of the component. For more details on the displayed data we refer to the caption of Figure~\ref{gerr}. The results were obtained by randomly subsampled cosine transformation encoding matrices.}
\label{ferr}
\end{center}
\end{figure}
\begin{figure}[!ht]
\begin{center}
\includegraphics[width=\textwidth]{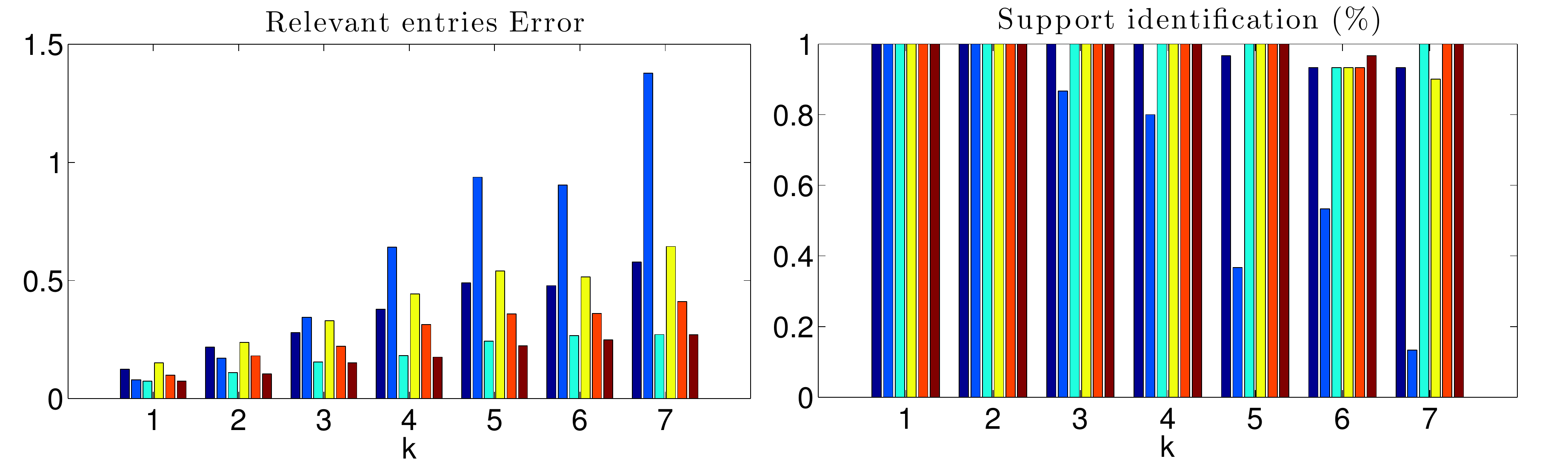}
\end{center}
\caption{The subfigures represent the error on the relevant entries and the support identification property by knowledge of $k$. For more details on the displayed data we refer to the caption of Figure~\ref{gerr}. The results were obtained by random subsampled cosine transformation encoding matrices.}
\label{fbig}
\end{figure}

\begin{figure}[!ht]
\begin{center}
\includegraphics[width=\textwidth]{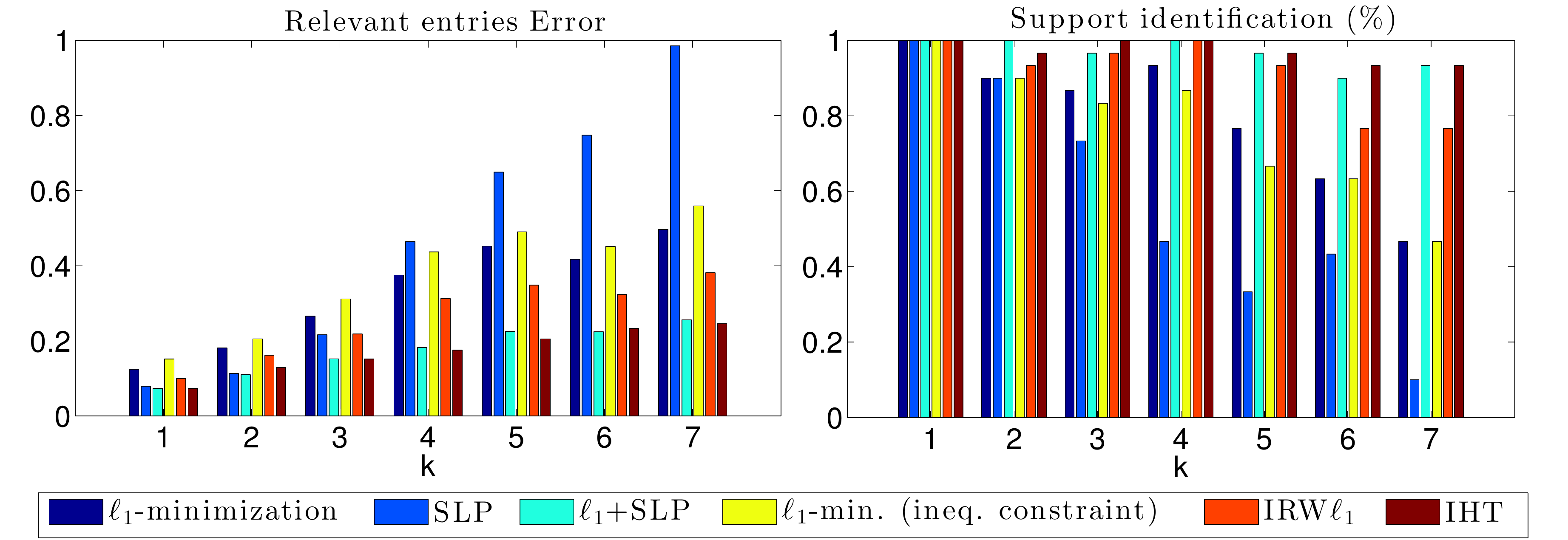}
\end{center}
\caption{The subfigures represent the error on the relevant entries and the support identification property by knowledge of $r$. For more details on the displayed data we refer to the caption of Figure~\ref{gerr}. The results were obtained by random subsampled cosine transformation encoding matrices.}
\label{fsupp}
\end{figure}

\begin{figure}[p!]
\begin{center}

\hspace{-1cm}
\subfigure[]{ \label{dis(a)}
\includegraphics[width=0.51\textwidth]{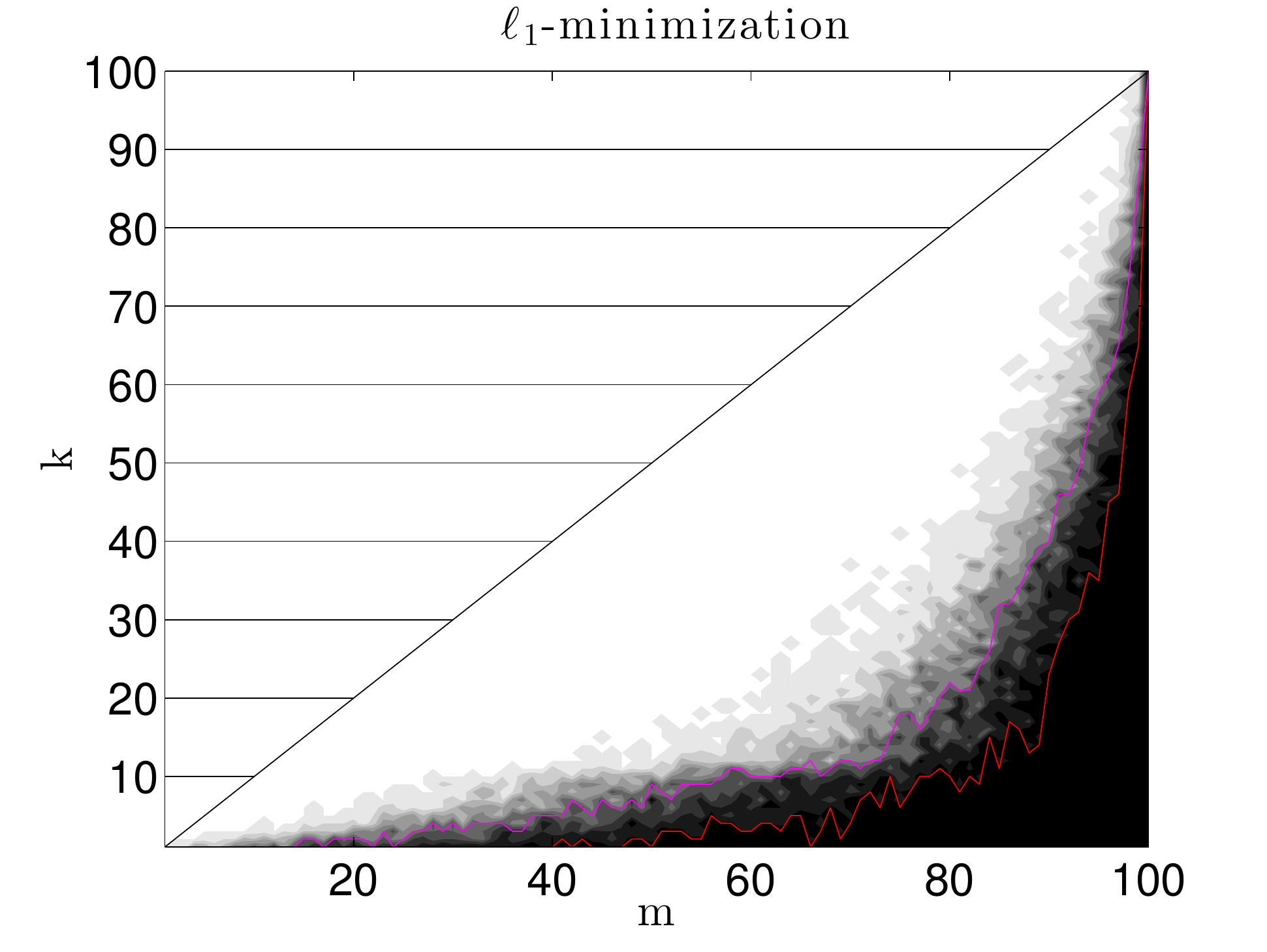}} 
\subfigure[]{\label{dis(b)}
\includegraphics[width=0.51\textwidth]{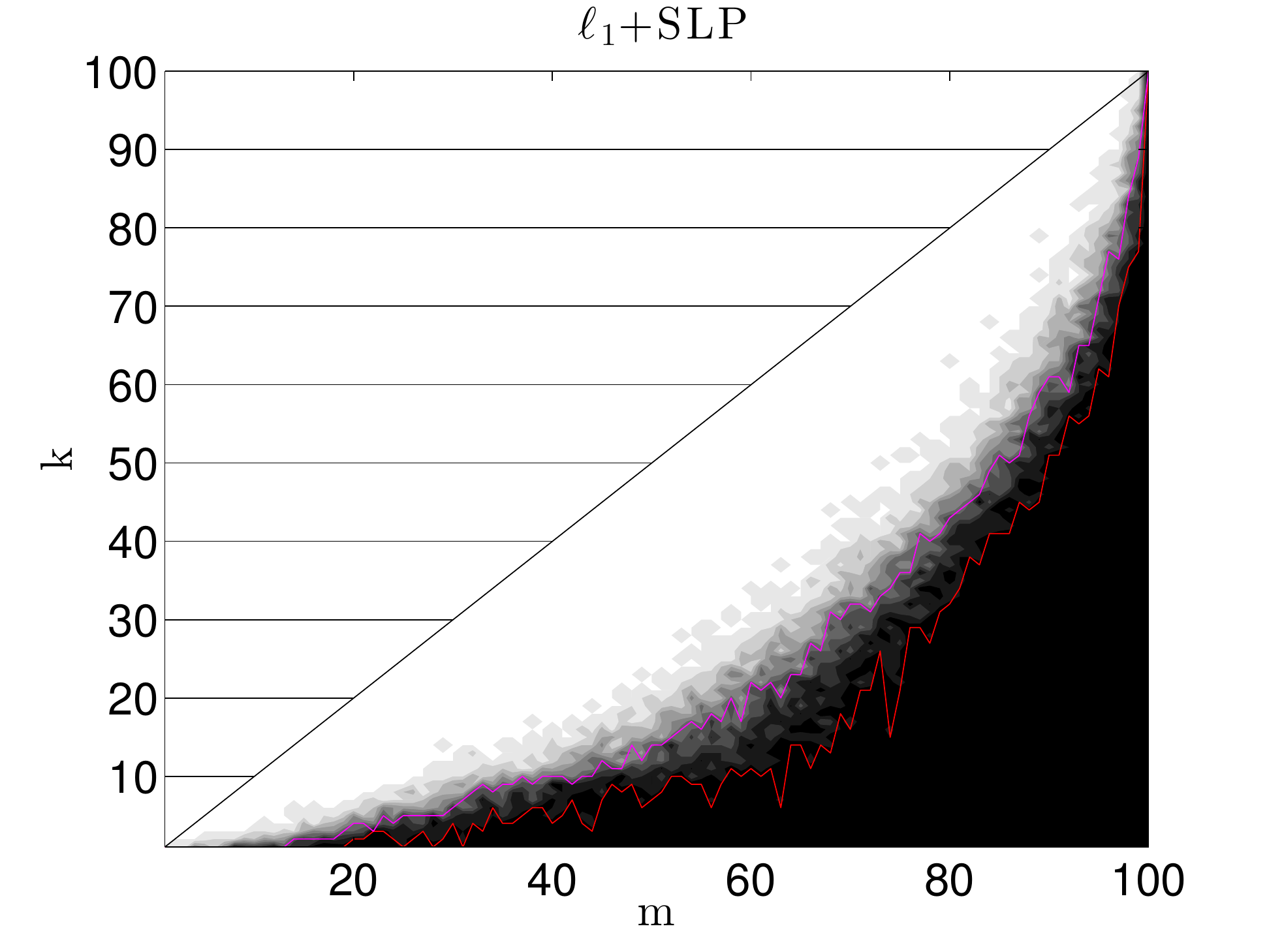}} 
~\\
\hspace{-1cm}
\subfigure[]{\label{dis(c)}
\includegraphics[width=0.51\textwidth]{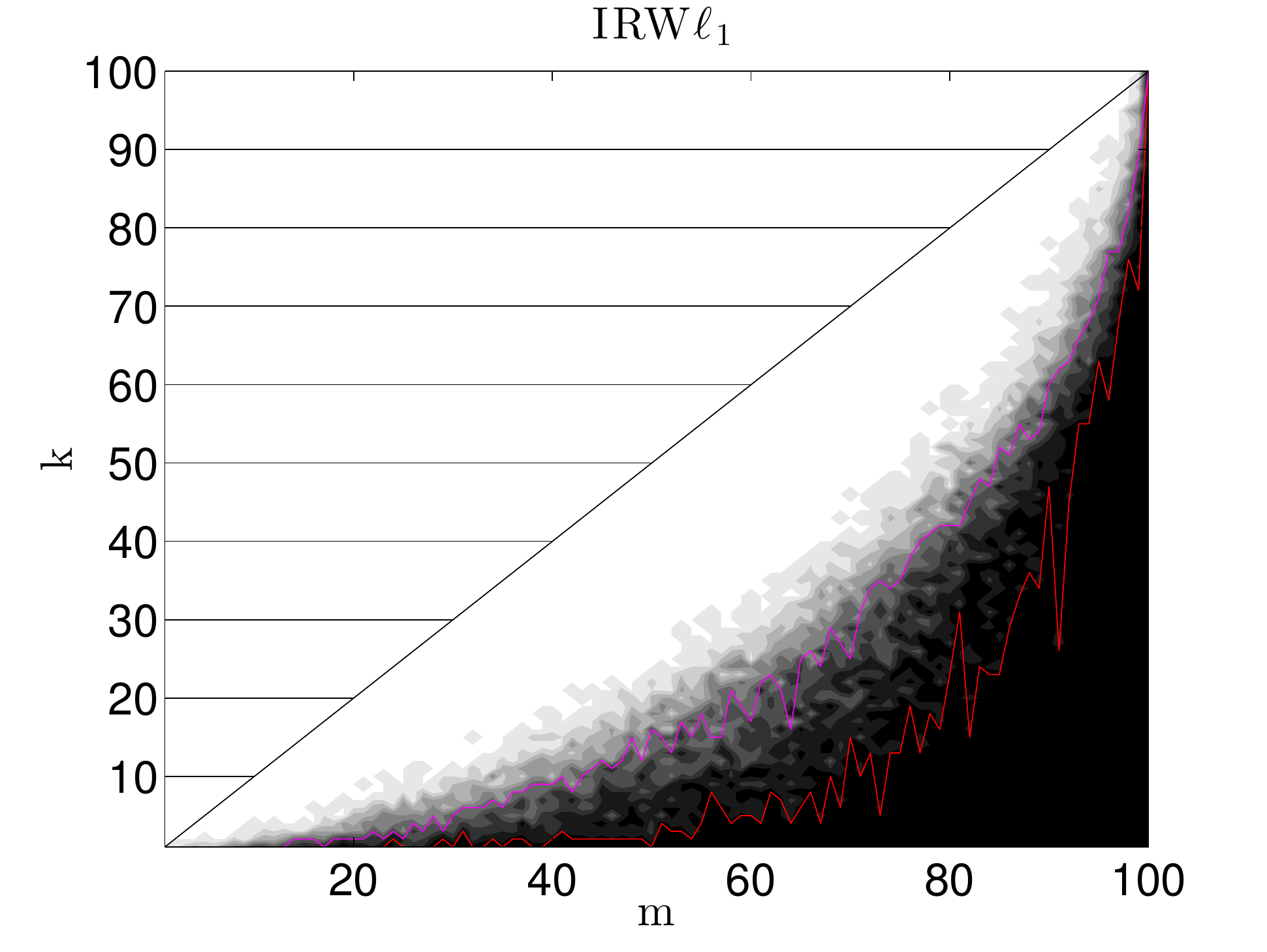}}
\subfigure[]{\label{dis(d)}
\includegraphics[width=0.51\textwidth]{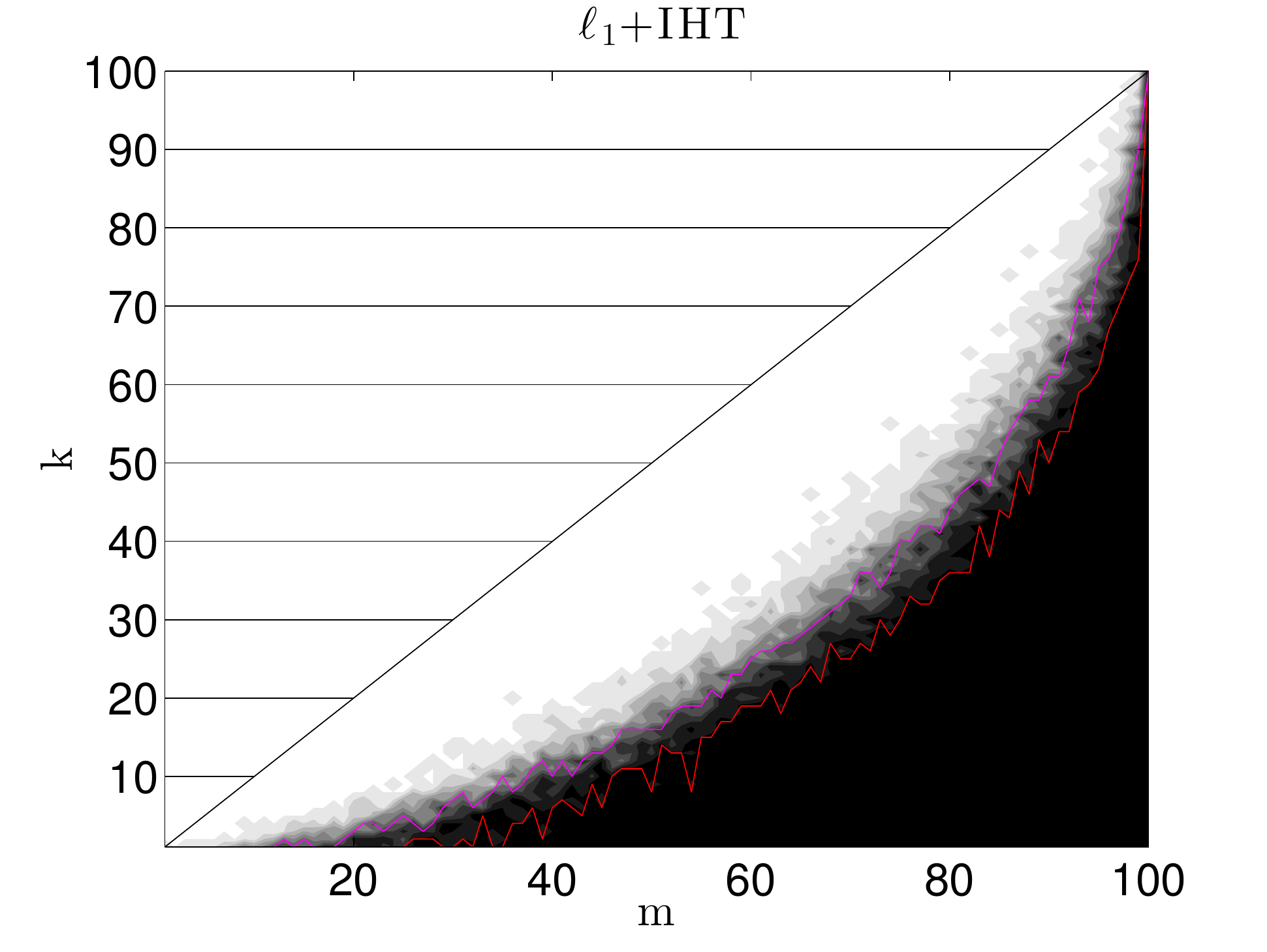}}
\end{center}
\caption{Phase transition diagrams. The black area represents the couple $(m,k)$ for which we had 100\% of support recovery. The results of (a) $\lone$-minimization, (b) $\lone+$SLP, (c) IRW$\lone$, and (d) $\lone$+IHT are reported. Note that the area for $k>m$ is not admissible. The {\color{red}red} line shows the level bound of 90\% of support recovery, and the {\color{magenta}magenta} line 50\% respectively.}

\label{dis}
\end{figure}

\begin{figure}[!htb]
\begin{center}
\subfigure[]{ \label{PTD_comp_50}
\includegraphics[width=0.48\textwidth]{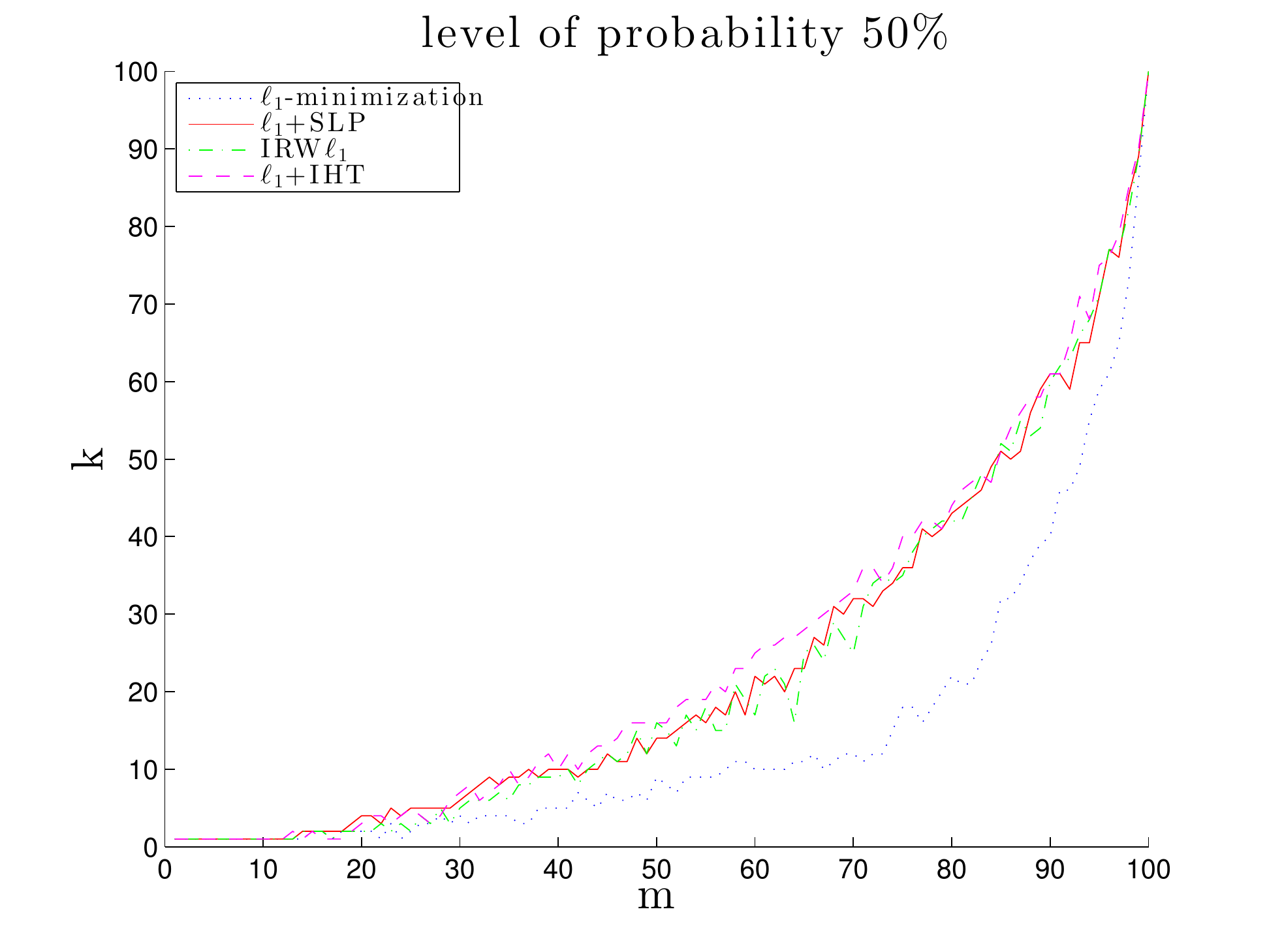}}
\subfigure[]{\label{PTD_comp_90}
\includegraphics[width=0.48\textwidth]{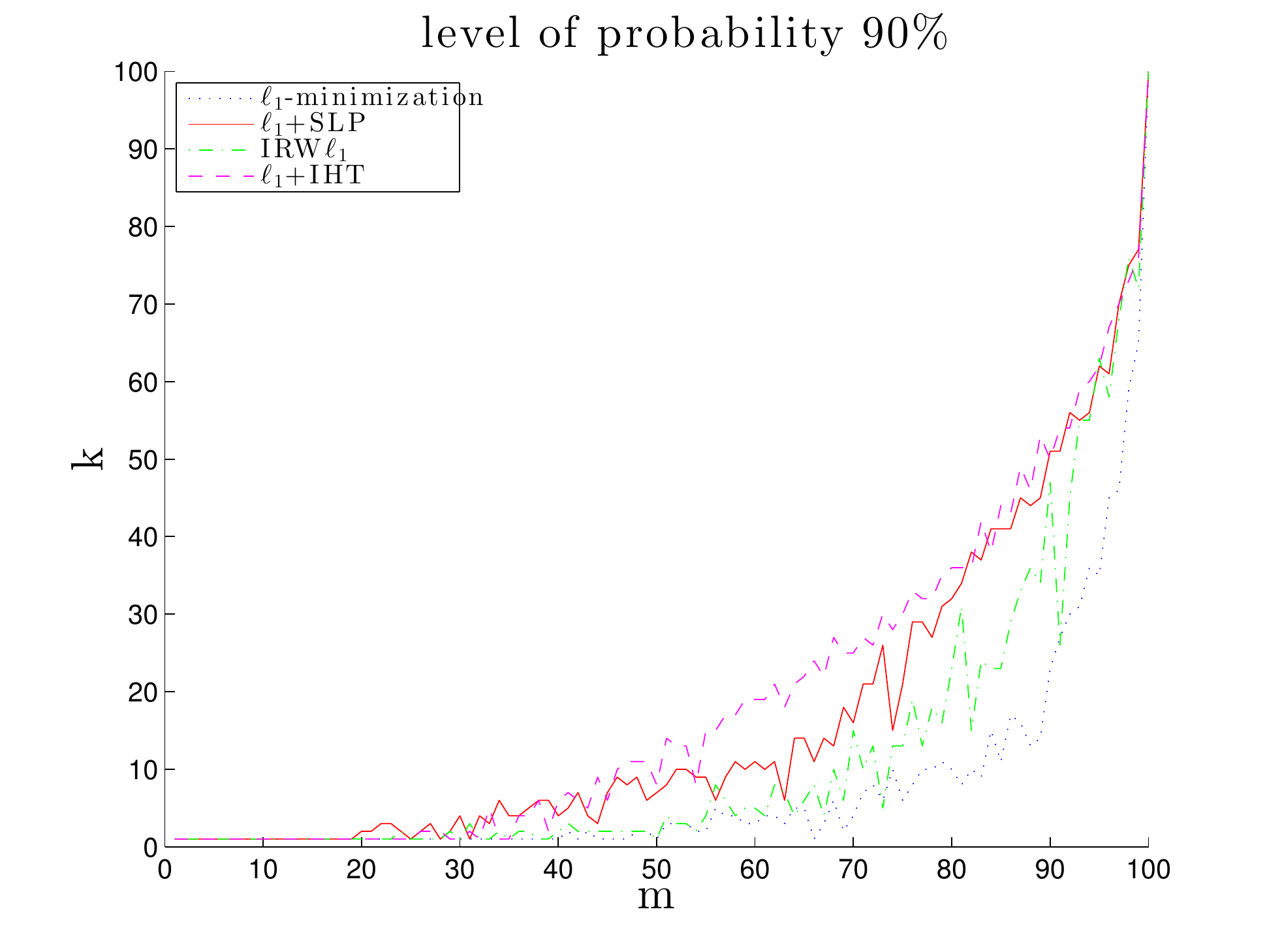}}
\end{center}
\caption{Comparison of phase transition diagrams for $\lone$-minimization ({\color{MidnightBlue}dark blue, dotted}), $\lone$+SLP ({\color{red}red}), IRW$\lone$ ({\color{green}green, dash-dotted}), and $\lone$+IHT ({\color{magenta}magenta, dashed}). The level bound of 50\% and 90\% as it is displayed in Figure~\ref{dis} is compared in (a) and (b) respectively.}
\label{fig:PTD_comp}
\end{figure}

\paragraph{Phase transition diagrams}
To give an even stronger support of the results in the previous paragraph, we extended the results of Figure~\ref{gsupp} to a wider range of $m$ and $k$. In Figure~\ref{dis}, we present phase transition diagrams of success rates in support recovery for $\lone$-minimization, IRW$\lone$, $\lone+$SLP, and $\lone$+IHT in presence of nearly maximally allowed noise, i.e., $0.8=r > \eta = 0.75$.

To produce phase transition diagrams, we varied the dimension of the measurement vector $m=1,\ldots,N$ with $N=100$, and solved 20 different problems for all the admissible $k=\#S_r(x)=1,\ldots,m$. We colored black all the points $(m,k)$, with $k\leq m$, which reported 100\% of correct support identification, and gradually we reduce the tone up to white for the 0\% result. The level bound of 50\% and 90\% is highlighted by a magenta and red line respectively. A visual comparison of the corresponding phase transitions confirms our previous expectations. In particular, $\lone$+SLP and $\lone$+IHT very significantly outperform $\lone$-minimization in terms of correct support recovery. The difference of both methods towards IRW$\lone$ is less significant but still important. In Figure~\ref{fig:PTD_comp}, we compare the level bounds of 50\% and 90\% among the four different methods. Observe that the 90\% probability bound indicates the largest positive region for $\lone$+IHT, followed by $\lone$+SLP, and only eventually by IRW$\lone$, while the bounds are much closer to each other in the case of the 50\% bound. Thus, surprisingly, $\ell_1$+IHT works in practice even better than $\lone$+SLP for some range of $m$, and offers the most stable support recovery results.


 \section*{Funding}
Marco Artina acknowledges the support of the International Research Training Group IGDK 1754 ``Optimization and Numerical Analysis for Partial Differential Equations with Nonsmooth Structures'' of the German Science Foundation.
Massimo Fornasier  acknowledges the support of the ERC-Starting Grant HDSPCONTR ``High-Dimensional Sparse Optimal Control''. Steffen Peter acknowledges the support of the Project ``SparsEO: Exploiting the Sparsity in Remote Sensing for Earth Observation'' funded by Munich Aerospace.

\bibliographystyle{abbrv}
\bibliography{damp_noise_folding_techreport}

\end{document}